\title{\vspace{-1cm}Regularity and drift by Osgood vector fields}
\date{\today}
\documentclass[12pt]{article}
\usepackage{authblk}
\usepackage{blindtext}
\usepackage{amsfonts}
\usepackage{amsmath}
\usepackage{amsthm}
\usepackage{graphicx}
\usepackage{hyperref}
\usepackage{slashed}
\usepackage{comment}
\usepackage[margin=2cm]{geometry}
\usepackage{cancel}
\usepackage{collectbox}

\makeatletter

\makeatother
\newtheorem{theorem}{Theorem}
\newtheorem{lemma}{Lemma}
\newtheorem{proposition}{Proposition}
\newtheorem{remark}{Remark}

\newtheorem{definition}{Definition}

\author{Joonhyun La}
\begin{document}
\maketitle
\abstract{We consider the problem of loss and propagation of regularity of transport equation with Osgood vector field. As an application, we obtain a quantitative stability estimate for 2D incompressible Euler equation with generalized Yudovich initial data.} 

\section{Introduction}
We consider the transport equation
\begin{equation} \label{transport}
\left \{
\begin{split}
&\partial_t \theta + u \cdot \nabla_x \theta = 0, \\
&\theta(0, \cdot) = \theta_0,
\end{split}
\right.
\end{equation}
where $u$ is a divergence-free field. It is well-known that if $u$ is has an Osgood modulus of continuity, that is, 
\begin{equation}
|u(x, t ) - u(t, y)| \le C \varphi(|x-y|),
\end{equation} 
for some increasing function $\varphi: (0, m) \rightarrow (0, \infty)$ satisfying
\begin{equation}
\begin{split}
&\lim_{r\rightarrow 0^+} \varphi(r) = 0, \\
&\int_0 ^m \frac{dr}{\varphi(r) } = \infty,
\end{split}
\end{equation}
then the associated ODE for the flow
\begin{equation} \label{ODE}
\left \{
\begin{split}
\frac{d}{dt} \phi(x,t) &= u(\phi(x,t), t), \\
\phi(x,0) &= x
\end{split} \right.
\end{equation}
has the unique solution (e.g. \cite{BCD2011}, \cite{MR1929104}) and there is unique weak integrable solution for \eqref{transport}, given by
\begin{equation}
\theta(x,t) = \theta_0 (\phi^{-1} (x,t) )
\end{equation}
(\cite{MR2439520}, \cite{MR4286465})
In that regard, in terms of existence and uniqueness of solution all Osgood modulus of continuities are equally good: they generate unique flows. \newline
What about the propagation of regularity? Suppose that $\theta_0 \in \dot{H}^\sigma (\mathbb{R}^d)$ for some $\sigma>0$. Is $\theta(t) \in \dot{H}^\sigma(\mathbb{R}^d)$? If $u$ is not Lipschitz, the answer is negative in general and there is loss of regularity. However, the degree of loss differs depending on the regularity of $u$. If $u$ is log-Lipschitz or better, we have losing estimates, that is, $\theta(t) \in \dot{H}^{\sigma(t) } (\mathbb{R}^d)$, for $\sigma(t) >0$ (e.g. \cite{BCD2011}). On the other hand, if $u \in \dot{W}^{1,p}$, $p<\infty$, one can immediately lose all Sobolev regularities: in \cite{ACM2018}, it has been shown that there is a divergence-free field $u \in \cap_{1 \le p < \infty} \dot{W}^{1,p}$ and $\theta_0 \in H^{\sigma} (\mathbb{R}^d)$ such that $\theta(t) \notin \dot{H}^{s} (\mathbb{R}^d)$ for every $s>0$. On the other hand, logarithm of a derivative can be preserved (e.g. \cite{MR2369485}, \cite{BN2018}, \cite{MS2022})

Osgood vector fields $u$ which are worse than log-Lipschitz lie between these two well known cases, and one may naturally ask the following questions. First, does Sobolev regularity propagate, even in a losing manner? Second, is there any kind of regularity, better than logarithm of a derivative, which is propagated? In this paper, we attempt to answer these questions.

First, we show that Sobolev regularity will not be propagated in general.
\begin{theorem}\label{main1}
Let $d \ge 2$ and $0<\sigma<\frac{d}{2}$. For each admissible growth function $\Theta$ (see definition \ref{admissible}), there exists a divergence-free vector field $u$ with a modulus of continuity $\varphi_\Theta$ (see definition \ref{assmod}) and $\theta_0 \in {H}^{\sigma} (\mathbb{R}^d)$ such that $\theta(t) \notin {H}^s (\mathbb{R}^d)$ for every $t>0, s>0$. 
\end{theorem}
As $\varphi_\Theta$ is an Osgood modulus of continuity (Lemma \ref{Osgood}), we see that all Sobolev regularity may loss immediately for Osgood velocity fields. Also, one may find $\theta_0 \in H^{\sigma} (\mathbb{R}^d)$ for any $\sigma > 0$: see Remark \ref{anysigma}.

Second, we show that certain regularities propagate, which depend on the modulus of continuity of $u$.  
\begin{theorem} \label{main2}
Suppose that $u$ is a divergence-free vector field with an Osgood modulus of continuity $\varphi$, and let $\mu_0$ be a modulus of continuity. 
\begin{enumerate}
\item Suppose that $\theta_0$ has modulus of continuity $\mu_0$, and $\theta(t)$ be the solution of \eqref{transport}. Then $\theta(t)$ has a modulus of continuity $\mu_0 \circ \mu_{[u]_\varphi, t}$ for $t \ge 0$ (see \eqref{mumod} for the definition of $\mu_{[u]_\varphi, t}$).
\item Let $T>0$ and suppose that $A(u) := \int_0 ^1 \frac{ \mu_0 \circ \mu_{[u]_{\varphi}, T } (r) }{r} dr < \infty$. Suppose that there exists an $g \in L^2 (\mathbb{R}^d) $ such that \eqref{initmod} holds. Then for $t \in [0, T]$,
\begin{equation}
\int_{B_1 (0) } \int_{\mathbb{R}^d} \frac{| \theta(x, t) - \theta(x+h, t)|^2 }{|h|^d  } \mu_0 \circ \mu_{[u]_\varphi, t } (|h| )^{-1} dx dh \le C \| g \|_{L^2 (\mathbb{R}^d) }^2  A(u).
\end{equation}
\end{enumerate}
\end{theorem}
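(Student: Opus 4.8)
The plan is to run everything through the explicit representation $\theta(x,t)=\theta_0(\phi^{-1}(x,t))$ together with the single ``engine'' behind the definition \eqref{mumod}: the flow estimate $|\phi^{-1}(x,t)-\phi^{-1}(y,t)|\le \mu_{[u]_\varphi,t}(|x-y|)$, valid with the \emph{same} profile as for the forward flow. Indeed, if $a(s),b(s)$ are the two backward trajectories of $u$ with $a(t)=x$, $b(t)=y$ (so $a(0)=\phi^{-1}(x,t)$, $b(0)=\phi^{-1}(y,t)$), then $w(s):=|a(s)-b(s)|$ satisfies $|w'(s)|\le [u]_\varphi\,\varphi(w(s))$; writing $\Phi(\tau)=\int_\tau^m \varphi^{-1}$, one gets $\Phi(w(0))\ge \Phi(w(t))-[u]_\varphi t$, i.e. $w(0)\le \mu_{[u]_\varphi,t}(w(t))$, which is exactly the asserted bound because the Osgood comparison is insensitive to the sign of $w'$. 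I will also use two structural facts about $\mu_{[u]_\varphi,t}$ that are part of the same construction: $t\mapsto \mu_{[u]_\varphi,t}(r)$ is nondecreasing, and $\mu_{[u]_\varphi,t}$ is itself a modulus of continuity (in particular $\mu_{[u]_\varphi,t}(r)\to0$ as $r\to0$), so that $\mu_0\circ\mu_{[u]_\varphi,t}$ is a modulus and $A(u)$ makes sense.

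For part (1) this is immediate: since $\mu_0$ is increasing,
\[
|\theta(x,t)-\theta(y,t)|=|\theta_0(\phi^{-1}(x,t))-\theta_0(\phi^{-1}(y,t))|\le \mu_0\!\left(|\phi^{-1}(x,t)-\phi^{-1}(y,t)|\right)\le \mu_0\!\left(\mu_{[u]_\varphi,t}(|x-y|)\right),
\]
using the flow estimate in the last step.

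For part (2) the key reduction is to the $L^2$‑averaged increment bound
\[
\int_{\mathbb{R}^d}|\theta(x,t)-\theta(x+h,t)|^2\,dx\ \le\ C\,\|g\|_{L^2(\mathbb{R}^d)}^2\,\big(\mu_0\circ\mu_{[u]_\varphi,t}(|h|)\big)^2,\qquad |h|\le 1,\ t\le T. \tag{$\star$}
\]
Granting $(\star)$, divide by $\mu_0\circ\mu_{[u]_\varphi,t}(|h|)$ and by $|h|^d$, integrate in $h$ over $B_1(0)$ in polar coordinates, and use monotonicity in $t$ to replace $t$ by $T$: this yields $C\,\|g\|_{L^2}^2\,|\mathbb{S}^{d-1}|\int_0^1 \tfrac{\mu_0\circ\mu_{[u]_\varphi,T}(r)}{r}\,dr=C'\|g\|_{L^2}^2 A(u)$. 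To prove $(\star)$, substitute $y=\phi^{-1}(x,t)$; then $\theta(x,t)-\theta(x+h,t)=\theta_0(y)-\theta_0(y+k(y))$ with $k(y):=\phi^{-1}(\phi(y,t)+h,t)-y$, and the flow estimate gives $|k(y)|\le \mu_{[u]_\varphi,t}(|h|)=:\delta$ uniformly in $y$. By the hypothesis \eqref{initmod}, $|\theta_0(a)-\theta_0(b)|\le (g(a)+g(b))\,\mu_0(|a-b|)$, so $|\theta_0(y)-\theta_0(y+k(y))|\le (g(y)+g(y+k(y)))\,\mu_0(\delta)$; squaring, using $(p+q)^2\le 2p^2+2q^2$, and integrating in $x$ (equivalently in $y$, the substitution being Lebesgue–measure preserving since $u$ is divergence free, as is $y\mapsto \phi^{-1}(\phi(y,t)+h,t)$) gives $\int (g(y)^2+g(y+k(y))^2)\,dx\le 2\|g\|_{L^2}^2$, hence $(\star)$.

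The only genuine obstacle is the inverse‑flow Osgood estimate with the same modulus $\mu_{[u]_\varphi,t}$ together with its monotonicity in $t$ and the fact that it is a modulus of continuity; these are precisely what is needed to make sense of \eqref{mumod} and of part (1), so once they are recorded, part (2) is a measure‑preserving change of variables plus Fubini as above. Two minor loose ends: first, if $\varphi$ is defined only on $(0,m)$ with $m<1$, the annulus $m\le|h|\le1$ is handled by observing that there $\mu_0\circ\mu_{[u]_\varphi,t}(|h|)^{-1}$ is bounded above and below by constants and $(\star)$ (extended to all $|h|$, with $\mu_0$ taken bounded) bounds that piece by $C\|g\|_{L^2}^2$, which is absorbed into the final constant; second, should the domain of integration in the theorem be read as $x\in B_1(0)$ rather than $h\in B_1(0)$, the identical argument applies, with $\int_{B_1(0)}$ in place of $\int_{\mathbb{R}^d}$ in $(\star)$ only strengthening the measure‑preserving bound.
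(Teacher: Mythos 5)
Your proposal is correct and takes essentially the same route as the paper: part (1) is exactly the composition of $\mu_0$ with the backward-flow Osgood estimate \eqref{mumod}, and part (2) rests on the same ingredients — the pointwise bound coming from \eqref{initmod}, measure preservation of the (divergence-free) flow, monotonicity of $\mu_{[u]_\varphi,t}$ in $t$, and integration in $h$ against $|h|^{-d}$ to produce $A(u)$. The only cosmetic difference is that you obtain the averaged increment bound $(\star)$ via a measure-preserving change of variables and $(p+q)^2\le 2(p^2+q^2)$, whereas the paper derives the same bound through a layer-cake representation.
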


As an application, we obtain a quantitative stability result for 2D Euler's equation in generalized Yudovich class with additional regularity for initial data, which improves that of \cite{KL2022}. Let $n \in \mathbb{Z}_{\ge1}$, and $\Theta_n: \mathbb{R}_{>0} \rightarrow \mathbb{R}_{>0}$ with $$\Theta_n(z) =  \log z \log_2 z \cdots \log_{n-1} z $$ for sufficiently large $z>1$. Let $\omega_{0,1}, \omega_{0,2} \in Y_{\Theta_n} (\mathbb{R}^2)$ with $u_{0,i} = \nabla^\perp \Delta^{-1} \omega_i \in L^2(\mathbb{R}^2)$ for $i=1,2$, where
\begin{equation}
Y_{\Theta_n} (\mathbb{R}^2) = \left \{ f \in \cap_{1 \le p < \infty} L^p(\mathbb{R}^2 ) | \| f \|_{L^p (\mathbb{R}^2) } \le \Theta_n(p) \right \}, \qquad \| f \|_{Y_{\Theta_n} } = \sup_p \frac{\|f\|_{L^p} }{\Theta_n(p) }.
\end{equation} 
Furthermore, suppose that $\omega_{0,i} \in H^s(\mathbb{R}^2)$ for some $0< s \le 1$, $i=1,2$. Let $\omega_i$, $i=1, 2$ be solutions of the initial-value problem
\begin{equation}
\left \{
\begin{split}
&\partial_t \omega_i + u_i \cdot \nabla_x \omega_i = 0, \\
&u_i = \nabla^\perp \Delta^{-1} \omega_i, \\ 
&\omega_i (0,x) = \omega_{0, i} (x).
\end{split} \right.
\end{equation}
\begin{theorem}\label{Euler}
If $\| u_{0,1} - u_{0,2} \|_{L^2} $ is sufficiently small, then there exist constants $C, C_1, C_2, M>0$, $\gamma < 0$ such that 
\begin{equation} \label{stabilityestimate}
\| \omega_1 (t) - \omega_2 (t) \|_{L^2} ^2 \le C \left ( \mu_{\omega, n, t}   \left ( \left (2 + \frac{ C \max_i \| \omega_{0,i} \|_{L^2}^2 }{  \mu_{\omega, n, t} ( \| u_{0,1} - u_{0,2} \|_{L^2} ^2 ) } \right )^\gamma \right ) \right )^s
\end{equation}
where $\mu_{\omega, n,t}$ is defined by
\begin{equation} \label{modcontivelvor}
\mu_{\omega,n, t} (r) := \frac{C_1}{e_{n-1} \left ( \left ( \log_{n-1} (C_2/r) \right )^{1/\exp(t M \max_i \| \omega_{0,i}\|_{Y_{\Theta_n} }  ) } \right ) } .
\end{equation}
\end{theorem}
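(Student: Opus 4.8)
\emph{Strategy and preliminaries.} The plan is to separate the estimate into a Lagrangian, velocity--level part --- essentially the generalized Yudovich stability analysis (cf.\ \cite{KL2022}), which produces the inner nested expression of \eqref{stabilityestimate} --- and a vorticity--level part, in which the hypothesis $\omega_{0,i}\in H^{s}$ is combined with the propagation--of--regularity bound Theorem~\ref{main2}(2) to convert closeness of the flows into closeness of the vorticities, at the cost of the outer power $s$. Write $\delta u_{0}=u_{0,1}-u_{0,2}$, $\delta\omega_{0}=\omega_{0,1}-\omega_{0,2}$, $\delta u(t)=u_{1}(t)-u_{2}(t)$, $\delta\omega(t)=\omega_{1}(t)-\omega_{2}(t)$, and let $\phi_{i}(\cdot,t)$ be the flow of $u_{i}$. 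Since each $\omega_{i}$ is transported by a divergence--free field, all $L^{p}$ norms are conserved; in particular $\|\omega_{i}(t)\|_{L^{2}}$, $\|\omega_{i}(t)\|_{Y_{\Theta_{n}}}$ and the energy are constant in $t$. A Biot--Savart estimate shows that $\omega\in Y_{\Theta_{n}}$ forces $u=\nabla^{\perp}\Delta^{-1}\omega$ to have the Osgood modulus $\varphi_{\Theta_{n}}$ of Definition~\ref{assmod} with $[u]_{\varphi_{\Theta_{n}}}\le M\bigl(\|\omega\|_{Y_{\Theta_{n}}}+\|\omega\|_{L^{2}}\bigr)$, hence $[u_{i}(t)]_{\varphi_{\Theta_{n}}}\le M\max_{i}\|\omega_{0,i}\|_{Y_{\Theta_{n}}}$ uniformly in $t$ (absorbing the conserved $L^{2}$ norm). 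By Lemma~\ref{Osgood} and \eqref{mumod}, the flows $\phi_{i}(\cdot,t)$ and their inverses carry the modulus of continuity $\mu_{[u_{i}]_{\varphi_{\Theta_{n}}},t}$; integrating the Osgood ODE $\dot{y}=[u]_{\varphi_{\Theta_{n}}}\varphi_{\Theta_{n}}(y)$ for the $Y_{\Theta_{n}}$ scaling identifies this modulus with $\mu_{\omega,n,t}$ of \eqref{modcontivelvor}, the iterated exponential $e_{n-1}$ and the exponent $1/\exp\!\bigl(tM\max_{i}\|\omega_{0,i}\|_{Y_{\Theta_{n}}}\bigr)$ being exactly the output of that integration.

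\emph{Step 1: Lagrangian stability (the inner estimate).} Let $D(t)$ be an appropriate weighted second moment of $\phi_{1}(\cdot,t)-\phi_{2}(\cdot,t)$, whose evolution is coupled with $\|\delta u(t)\|_{L^{2}}^{2}$ and whose effective initial size is of order $\|\delta u_{0}\|_{L^{2}}^{2}$. From $\tfrac{d}{dt}(\phi_{1}-\phi_{2})=u_{1}(\phi_{1},t)-u_{2}(\phi_{2},t)$ one splits the right side as $[u_{1}(\phi_{1},t)-u_{1}(\phi_{2},t)]+\delta u(\phi_{2},t)$: the first bracket is $\le[u_{1}]_{\varphi_{\Theta_{n}}}\varphi_{\Theta_{n}}(|\phi_{1}-\phi_{2}|)$ pointwise and contributes, by concavity of $\varphi_{\Theta_{n}}$ and Jensen, at most $C[u_{1}]_{\varphi_{\Theta_{n}}}D(t)\ell_{n}(D(t))$ where $\varphi_{\Theta_{n}}(r)=r\,\ell_{n}(r)$; the second, after Cauchy--Schwarz and the measure--preserving substitution $y=\phi_{2}(x,t)$, contributes at most $D(t)+\|\delta u(t)\|_{L^{2}}^{2}$. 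The loop is closed by the generalized Loeper inequality for $Y_{\Theta_{n}}$ from \cite{KL2022}, $\|\delta u(t)\|_{L^{2}}^{2}\le C\max_{i}\|\omega_{0,i}\|_{L^{2}}^{2}\,\Psi_{n}(D(t))$ with $\Psi_{n}$ an Osgood modulus of the same scale. This gives a differential inequality $\tfrac{d}{dt}D(t)\le C\,h_{n}(D(t))$ with $h_{n}$ Osgood; Osgood's lemma with the explicit (iterated--logarithmic) antiderivative of $1/h_{n}$, followed by the passage from the $L^{2}$ flow--difference bound to control of the near--identity measure--preserving map $\psi_{t}:=\phi_{2}^{-1}(\cdot,t)\circ\phi_{1}(\cdot,t)$ --- obtained by interpolating against the uniformly bounded flow modulus $\mu_{\omega,n,t}$ --- yields
\[
\|\psi_{t}-\mathrm{id}\|_{L^{2}}^{2}\ \le\ \Bigl(2+\frac{C\max_{i}\|\omega_{0,i}\|_{L^{2}}^{2}}{\mu_{\omega,n,t}\bigl(\|\delta u_{0}\|_{L^{2}}^{2}\bigr)}\Bigr)^{\gamma}=:\mathcal{F}(t),\qquad\gamma<0,
\]
which also bounds $\|\delta u(t)\|_{L^{2}}^{2}$; the negative $\gamma$ is precisely the loss in that interpolation, and the smallness of $\|\delta u_{0}\|_{L^{2}}$ is used to keep $D(t)$ in the range where $h_{n}$ has the stated form and the nested logarithms in $\mu_{\omega,n,t}$ are defined.

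\emph{Step 2: from the flows to the vorticity (the outer power $s$).} Using $\omega_{i}(\cdot,t)=\omega_{0,i}\circ\phi_{i}^{-1}(\cdot,t)$ and the change of variables $y=\phi_{1}^{-1}(x,t)$,
\[
\|\delta\omega(t)\|_{L^{2}}\ \le\ \|\delta\omega_{0}\|_{L^{2}}+\|\omega_{0,2}-\omega_{0,2}\circ\psi_{t}\|_{L^{2}}.
\]
For the first term, $\|\delta\omega_{0}\|_{\dot H^{-1}}=\|\delta u_{0}\|_{L^{2}}$ while $\|\delta\omega_{0}\|_{\dot H^{s}}$ is bounded by hypothesis, so interpolation gives $\|\delta\omega_{0}\|_{L^{2}}^{2}\lesssim\|\delta u_{0}\|_{L^{2}}^{2s/(1+s)}$, which is of lower order relative to the right side of \eqref{stabilityestimate} once $\|\delta u_{0}\|_{L^{2}}$ is small and is absorbed into $C$. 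For the second term, apply Theorem~\ref{main2}(2) with $\theta_{0}=\omega_{0,2}$, $u=u_{2}$, $\varphi=\varphi_{\Theta_{n}}$ and $\mu_{0}$ a modulus of continuity realizing the $H^{s}$ regularity of $\omega_{0,2}$: then \eqref{initmod} holds with $\|g\|_{L^{2}}\lesssim\|\omega_{0,2}\|_{H^{s}}$, $A(u_{2})<\infty$ (since $\mu_{0}\circ\mu_{\omega,n,T}$ decays faster than any power of $r$, so the integral defining $A$ converges after the change of variable), and one obtains a bound on $\omega_{2}(\cdot,t)$ in the weighted Gagliardo seminorm with weight $(\mu_{0}\circ\mu_{\omega,n,t})^{-1}$. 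Feeding the displacement bound $\mathcal{F}(t)$ of Step 1 into this, via the standard estimate controlling $\|f-f\circ\psi\|_{L^{2}}$ for measure--preserving $\psi$ of small displacement by the modulus of $f$ evaluated at that displacement, produces
\[
\|\omega_{0,2}-\omega_{0,2}\circ\psi_{t}\|_{L^{2}}^{2}\ \le\ C\,\bigl(\mu_{\omega,n,t}(\mathcal{F}(t))\bigr)^{s}
\]
(up to the squared--versus--unsquared normalization of the moduli built into \eqref{modcontivelvor}), and combining the two terms gives \eqref{stabilityestimate}.

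\emph{Main obstacle.} The genuinely delicate work is in Step 1 and in the conversion within Step 2. In Step 1 one must invoke the generalized Loeper inequality for the $Y_{\Theta_{n}}$ class --- which replaces the usual coefficient $\|\omega\|_{L^{\infty}}$ by $\max_{i}\|\omega_{0,i}\|_{L^{2}}$ together with a slowly varying modulus --- and then integrate the resulting Osgood inequality precisely enough that the output is exactly the nested expression in \eqref{stabilityestimate}, with its constants $C_{1},C_{2},M$ and, crucially, a \emph{negative} $\gamma$: the latter is produced by the passage from an $L^{2}$ flow--difference bound to the stronger control of $\psi_{t}$ needed downstream, i.e.\ by interpolating against the uniformly bounded flow modulus $\mu_{\omega,n,t}$, which costs a power; tracking exactly which $L^{p}$ norm of the flow difference is being controlled, and how much is lost, is the main grind. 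In Step 2 the care is in using Theorem~\ref{main2}(2) in its $L^{2}$--averaged rather than pointwise form, matching $\mu_{0}$ to the $H^{s}$ scaling (and checking $A(u)<\infty$), and passing from the weighted Gagliardo bound on $\omega_{2}(t)$ to the bound on $\|\omega_{0,2}-\omega_{0,2}\circ\psi_{t}\|_{L^{2}}$. The two conceptual inputs --- Biot--Savart regularity for $Y_{\Theta_{n}}$ and the propagation bound Theorem~\ref{main2} --- are available off the shelf, the latter being exactly the new tool that lets one close the estimate in $L^{2}$ of the \emph{vorticity} rather than only in $L^{2}$ of the velocity, which is the improvement over \cite{KL2022}.
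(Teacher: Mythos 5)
Your route is genuinely different from the paper's, and in its present form it has two concrete gaps. The paper never works with the flow maps at the stability stage: it (a) cites the velocity estimate \eqref{mu2} from \cite{KL2022}, (b) propagates the weighted Gagliardo functional of each $\omega_i(t)$ at time $t$ via the Lusin bound on $\omega_{0,i}\in H^s$ and the argument of Theorem \ref{main2}(2), and (c) applies the interpolation Lemma \ref{interpolation} to the \emph{difference} $f=\omega_1(t)-\omega_2(t)$, using $\|f\|_{\dot H^{-1}}=\|u_1(t)-u_2(t)\|_{L^2}$ together with \eqref{mu2}; the negative exponent $\gamma$ is nothing more than the choice $\varepsilon=(2+X)^\gamma$ with $|\gamma|<C^{-1}/2$, made so that $|\log\varepsilon|/\log(2+X)$ is small enough to absorb the second term of Lemma \ref{interpolation} into the left-hand side. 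In your proposal this entire mechanism is replaced by two assertions that are not derived: the displacement bound $\|\psi_t-\mathrm{id}\|_{L^2}^2\le\mathcal F(t)=(2+X)^\gamma$ of Step 1 is written down in exactly the shape of the answer, justified only by an unspecified "interpolation against the flow modulus which costs a power"; no estimate is given that produces this form, and there is no reason a Lagrangian Osgood argument outputs precisely it (in the paper the $\gamma$ has a completely different origin).

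The second gap is the one that would actually make the argument fail as stated. In Step 2 you control $\|\omega_{0,2}-\omega_{0,2}\circ\psi_t\|_{L^2}$ by "the modulus of $f$ evaluated at the displacement". But the only modulus available for $\omega_{0,2}\in H^s$ is the Lusin-type bound $|\omega_{0,2}(x)-\omega_{0,2}(y)|\le C|x-y|^s(D_s\omega_{0,2}(x)+D_s\omega_{0,2}(y))$ with $D_s\omega_{0,2}$ merely in $L^2$, and the only control on $\psi_t$ is in $L^2$. Then $\|\omega_{0,2}-\omega_{0,2}\circ\psi_t\|_{L^2}^2\lesssim\int|x-\psi_t(x)|^{2s}\bigl(D_s\omega_{0,2}(x)^2+D_s\omega_{0,2}(\psi_t(x))^2\bigr)dx$ cannot be split into $\|\psi_t-\mathrm{id}\|_{L^2}^{2s}\|D_s\omega_{0,2}\|_{L^2}^2$: the weight is correlated with the displacement, and H\"older would require $D_s\omega_{0,2}\in L^{2/(1-s)}$ (or an $L^\infty$ displacement bound), neither of which you have. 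This is precisely the obstruction that Lemma \ref{interpolation} is built to bypass, by trading the pointwise/displacement picture for the pair (weighted Gagliardo seminorm at time $t$, $\dot H^{-1}$ norm of the difference). Relatedly, your invocation of Theorem \ref{main2}(2) is idle in your scheme (you end up using only the time-zero regularity of $\omega_{0,2}$), whereas in the paper it is essential since the interpolation is performed at time $t$; and the extra composition with $\mu_{\omega,n,t}$ in your claimed bound $\|\omega_{0,2}-\omega_{0,2}\circ\psi_t\|_{L^2}^2\le C(\mu_{\omega,n,t}(\mathcal F(t)))^s$ is unexplained (a displacement bound of size $\mathcal F(t)$ would give $\mathcal F(t)^s$, not $\mu_{\omega,n,t}(\mathcal F(t))^s$). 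The peripheral steps (conservation of $L^p$ norms, $[u_i]_{\varphi_{\Theta_n}}\le M\|\omega_{0,i}\|_{Y_{\Theta_n}}$, the interpolation handling of $\|\omega_{0,1}-\omega_{0,2}\|_{L^2}$) are fine, but the core of the proof is missing.
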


\begin{remark}
Theorem \eqref{Euler} recovers the rate obtained in Proposition 18 of \cite{KL2022} for Yudovich case ($n=1$) and improves the rate for localized Yudovich case. In the case $n=1$, which is Yudovich case $\omega \in L^\infty$, $\mu_{\omega, n, t} ^i (r) = r^{C(t)}$, by Remark \ref{explicit}. Thus, $\| \omega_1 (t) - \omega_2 (t) \|_{L^2} ^2 \le C \|u_{0,1} - u_{0,2} \|_{L^2} ^{C(t) }$, which is the algebraic rate obtained in \cite{KL2022} (see also \cite{CDE2019}). On the other hand, when $n > 1$, we note that the result in \cite{KL2022} gives the rate of $1/\log(\mu_{\omega, n, t} ^2 (\| u_{0,1} - u_{0,2} \|_{L^2} ^2 ) )$. Since $\mu_{\omega, n, t} ^1$ vanishes faster than  logarithmic function by Lemma \ref{lognlipschitzreg}, the estimate \eqref{stabilityestimate} gives a smaller upper bound.
\end{remark}

\paragraph{Acknowledgement} The author greatly thanks Theo Drivas and In-Jee Jeong for stimulating discussion. Also, the author is deeply grateful to Gianluca Crippa and Anna Mazzucato for encouraging and helpful comments. 

\section{Immediate loss of Sobolev regularity}
\subsection{Growth functions and Osgood modulus of continuity}
\begin{definition} \label{admissible}
A function $\Theta: [1, \infty) \rightarrow (0, \infty)$ is called an admissible growth function if $\Theta$ is a differentiable function which is increasing with $\lim_{x\rightarrow \infty} \Theta(x) = \infty$, and there is a constant $M>1$, $C>0$ such that 
\begin{enumerate}
\item $\Theta(xy) \le \Theta(x) + \Theta(y) + C$ for all $x, y \ge M$, and
\item $x \Theta '(x) \le C \Theta(x)$ for all $x>M$.
\end{enumerate}
\end{definition}

\begin{remark}
The last two conditions are technical: they are satisfied for many slowly growing functions, and they are useful in obtaining certain bounds, but it would be interesting to obtain the same result with relaxed conditions. 
\end{remark}

An admissible growth function can grow slowly.
\begin{proposition} \label{logm}
$\Theta(x) = \log_m x, x > e_m (2) $, which is defined recursively by $\log_{m+1} x = \log_m (\log x)$, with $\log_1 x = \log x$, and $e_m = \log_m ^{-1}$ for $m \in \mathbb{N}$ is an admissible growth function ($\Theta(x)$ for $x \le e_m(2)$ can be chosen appropriately.)
\end{proposition}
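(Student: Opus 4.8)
The plan is to verify, one by one, the conditions in Definition~\ref{admissible} for $\Theta = \log_m$, with the quasi-subadditivity bound (item~1) being the only step that requires real work. On any half-line $[M,\infty)$ with $M > e_m(2)$, the function $\log_m$ is a composition of increasing, differentiable, positive functions, hence is itself differentiable, increasing and positive there, and since $\log_m x \to \infty$ it is unbounded; on $[1,M]$ one extends $\Theta$ by an arbitrary smooth increasing positive function, which the statement explicitly permits. This disposes of the qualitative hypotheses.

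For item~2 I would differentiate explicitly. The chain rule gives
\begin{equation}
\frac{d}{dx}\log_m x = \frac{1}{x\,\log x\,\log_2 x \cdots \log_{m-1} x},
\end{equation}
so that $x\,\Theta'(x) = \prod_{j=1}^{m-1}(\log_j x)^{-1}$. Enlarging $M$ if necessary so that $\log_j x \ge 1$ for all $1 \le j \le m-1$ and $\log_m x \ge 1$ whenever $x \ge M$, one gets $x\,\Theta'(x) \le 1 \le \log_m x = \Theta(x)$, so item~2 holds with $C=1$.

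The heart of the matter is item~1, namely $\log_m(xy) \le \log_m x + \log_m y + C$ for $x,y$ large, which I would establish by induction on $m$. For $m=1$ it is the identity $\log(xy) = \log x + \log y$, so $C=0$ suffices. For the inductive step I would write $\log_{m+1}(xy) = \log_m(\log x + \log y)$ and invoke the elementary inequality $a+b \le ab$, valid for $a,b \ge 2$ because $(a-1)(b-1) \ge 1$. Choosing the new threshold $M_{m+1}$ large enough that $x \ge M_{m+1}$ forces $\log x \ge \max(2, M_m)$, where $M_m$ is the threshold from the inductive hypothesis, monotonicity of $\log_m$ yields
\begin{equation}
\log_{m+1}(xy) = \log_m(\log x + \log y) \le \log_m\bigl((\log x)(\log y)\bigr) \le \log_m(\log x) + \log_m(\log y) + C_m,
\end{equation}
and the right-hand side is $\log_{m+1}x + \log_{m+1}y + C_m$; hence $C_{m+1} = C_m$ works, and in particular $C=0$ may be taken for every $m$.

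The only bookkeeping is to propagate the thresholds $M_m$ consistently up the induction, which I regard as the main --- though quite minor --- obstacle; the estimates themselves are one line each, and nothing beyond elementary calculus and the inequality $a+b\le ab$ is needed.
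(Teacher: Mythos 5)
Your proof is correct and follows essentially the same route as the paper: induction on $m$ for the quasi-subadditivity condition and a direct computation of $x\,\Theta'(x)$ for the derivative condition. The only (harmless) variation is in the inductive step, where you bound $\log x + \log y \le (\log x)(\log y)$ via $a+b\le ab$ for $a,b\ge 2$, whereas the paper bounds the sum by a constant multiple of $\log\max(x,y)$; your version is slightly cleaner and even shows the additive constant can be taken to be $0$ (any positive $C$ then trivially works in Definition~\ref{admissible}).
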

\begin{proof}
It suffices to show the last two conditions. The last condition holds immediately: for $x > e_m(2)$, $\Theta'(x) = \prod_{j=1} ^{m-1} \frac{1}{\log_j (x) } \frac{1}{x}$, so $\Theta'(x) x \le C < C \Theta(x)$. For the condition $\Theta(xy) \le \Theta(x) + \Theta(y) + C$, we use induction on $m$: for $m=1$, $\Theta(x) = \log x$ and the condition obviously holds. Suppose that For $\log_m x$ the condition hold, with $M = M_m, C = C_m$. Then for $m+1$, for $x, y \ge e^{M_m} = M_{m+1}$, we have
\begin{equation}
\begin{split}
\Theta(xy) &= \log_{m} (\log x + \log y ) \le \log_m \big ( (M_{m}+2) \log \max (x,y) \big ) \\
&\le C_m +  \log_m (M_m+2) + \log_{m+1} \max (x,y)  \\
& \le (C_m +  \log_m (M_m+2)) + \log_{m+1} x + \log_{m+1} y = \Theta( x )+ \Theta( y) + C_{m+1}.
\end{split}
\end{equation}
\end{proof}

Also, an admissible growth function cannot grow too fast.
\begin{proposition}
There exists a constant $C'>0$ such that $\Theta(x) \le C'(\log x + 1)$ for all $x \ge 1$. 
\end{proposition}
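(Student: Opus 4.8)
The plan is to exploit only the sub-additivity condition (1) together with monotonicity of $\Theta$; the derivative condition (2) is not needed for this particular bound. First I would fix a base point $x_0 := \max(M, e)$. This choice is $\ge M$, so condition (1) applies to any pair of arguments that are $\ge x_0$, and it also satisfies $\log x_0 \ge 1$, which will be used for the final normalization. Iterating condition (1) gives, by an immediate induction on $n$, that $\Theta(x_0^n) \le n\bigl(\Theta(x_0) + C\bigr)$ for every integer $n \ge 1$ (at each step one applies $\Theta(x_0^{k}\cdot x_0) \le \Theta(x_0^k) + \Theta(x_0) + C$, legitimate since $x_0^k \ge M$).

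Next, for $y \ge x_0$ I would set $n := \lfloor \log y / \log x_0 \rfloor$, so that $x_0^n \le y \le x_0^{n+1}$ and $n \ge 1$. Monotonicity of $\Theta$ and the iterated bound then yield $\Theta(y) \le \Theta(x_0^{n+1}) \le (n+1)\bigl(\Theta(x_0)+C\bigr) \le \bigl(\tfrac{\log y}{\log x_0} + 1\bigr)\bigl(\Theta(x_0) + C\bigr)$. Since $\log x_0 \ge 1$, we have $\tfrac{\log y}{\log x_0} + 1 = \tfrac{\log y + \log x_0}{\log x_0} \le \log y + 1$, and hence $\Theta(y) \le \bigl(\Theta(x_0)+C\bigr)(\log y + 1)$.

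Finally, for $1 \le y < x_0$ I would simply use monotonicity, $\Theta(y) \le \Theta(x_0)$, together with $\log y + 1 \ge 1$ (valid for all $y \ge 1$), to obtain the same inequality. Setting $C' := \Theta(x_0) + C$ then covers all $x \ge 1$, which is the claim. I do not expect a genuine obstacle here; the only points requiring a little care are choosing the base point $x_0$ large enough to both invoke condition (1) and make the $\log x_0 \ge 1$ normalization work, and correctly tracking the off-by-one coming from the floor function.
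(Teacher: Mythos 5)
Your proof is correct, but it takes a genuinely different route from the paper. The paper disposes of this proposition in one line, asserting it is ``immediate from the last property,'' i.e.\ from the derivative condition $x\Theta'(x)\le C\Theta(x)$; the intended argument is presumably to integrate the logarithmic derivative (and indeed in the examples such as $\Theta=\log_m$ one actually has $x\Theta'(x)\le C$, from which $\Theta(x)\le \Theta(1)+C\log x$ is immediate). You instead use only the submultiplicativity condition (1) together with monotonicity: iterating $\Theta(x_0^{k+1})\le \Theta(x_0^{k})+\Theta(x_0)+C$ with $x_0=\max(M,e)$, comparing a general $y\ge x_0$ with powers of $x_0$ via the floor of $\log y/\log x_0$, and handling $1\le y<x_0$ by monotonicity; the normalization $\log x_0\ge 1$ and the off-by-one bookkeeping are all in order, so the argument is complete with $C'=\Theta(x_0)+C$. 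Your route buys two things: it avoids differentiability entirely, and it is arguably more faithful to the stated hypotheses, since condition (2) as literally written only yields $\Theta(x)\le \Theta(M)(x/M)^{C}$ upon integration (for instance $\Theta(x)=x^{C}$ satisfies (2) and monotonicity but not the logarithmic bound), so the paper's one-line justification really rests either on condition (1) --- which is exactly what you prove --- or on the stronger reading $x\Theta'(x)\le C$ that holds in the paper's examples.
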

\begin{proof}
This is immediate from the last property.
\end{proof}

As a consequence, $\Theta$ is associated with an Osgood modulus of continuity. We use the following modulus of continuity, following \cite{CS2021}.
\begin{lemma} \label{Osgood}
Let $\Theta$ be an admissible growth function. The function $\varphi_\Theta: [0, \infty) \rightarrow [0, \infty)$ defined by
\begin{equation} \label{Eulermodconti}
\varphi_\Theta (r) = \begin{cases} r \log (e/r) \Theta (\log (e/r) ),  0 < r < e^{-2}, \\ e^{-2}3 \Theta(3), r > e^{-2} \end{cases}
\end{equation}
is an Osgood modulus continuity.
\end{lemma}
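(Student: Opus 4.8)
The plan is to verify the two defining properties of an Osgood modulus of continuity for $\varphi_\Theta$: first, that $\varphi_\Theta$ is a genuine modulus of continuity (nonnegative, increasing near $0$, continuous, with $\varphi_\Theta(r) \to 0$ as $r \to 0^+$), and second, the divergence of the Osgood integral $\int_0^m \frac{dr}{\varphi_\Theta(r)} = \infty$. The constants in the definition are chosen so that $\varphi_\Theta$ matches continuously at $r = e^{-2}$: there $r\log(e/r) = e^{-2}\cdot 3$ and $\Theta(\log(e/r)) = \Theta(3)$, so the two branches agree, and on $r > e^{-2}$ we simply take the constant value, which is harmless for the Osgood condition since the integral $\int_0^m \frac{dr}{\varphi_\Theta}$ only cares about behavior near $0$.

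First I would check monotonicity and the limit at $0$. Writing $s = \log(e/r)$, which ranges over $(3, \infty)$ as $r$ ranges over $(0, e^{-2})$, we have $r = e^{1-s}$, so $\varphi_\Theta(r) = e^{1-s} s\,\Theta(s)$. As $r \to 0^+$, $s \to \infty$, and since $\Theta(s) \le C'(\log s + 1)$ by the previous proposition (so $\Theta$ grows sub-polynomially, indeed sub-logarithmically), the exponential factor $e^{1-s}$ dominates and $\varphi_\Theta(r) \to 0$. For monotonicity near $0$ I would differentiate: $\frac{d}{dr}\big(r\log(e/r)\Theta(\log(e/r))\big)$; using $\frac{d}{dr}\log(e/r) = -1/r$ and the product rule, the derivative equals $\log(e/r)\Theta(\log(e/r)) + r\cdot(-1/r)\Theta(\log(e/r)) + r\log(e/r)\Theta'(\log(e/r))\cdot(-1/r)$, i.e. $\Theta(s)(s - 1) - s\Theta'(s)$ after substituting $s = \log(e/r)$. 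By admissibility condition (2), $s\Theta'(s) \le C\Theta(s)$, so this is at least $\Theta(s)(s - 1 - C) > 0$ once $s$ is large enough, i.e. once $r$ is small enough; shrinking the threshold (or adjusting the definition on the remaining compact interval, as the statement permits) makes $\varphi_\Theta$ increasing on the relevant neighborhood of $0$. Continuity is clear on each branch and at the matching point.

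The main point is the Osgood integral. Near $0$, substituting $s = \log(e/r)$, $r = e^{1-s}$, $dr = -e^{1-s}\,ds$, we get
\begin{equation}
\int_0^{e^{-2}} \frac{dr}{\varphi_\Theta(r)} = \int_3^\infty \frac{e^{1-s}\,ds}{e^{1-s}\,s\,\Theta(s)} = \int_3^\infty \frac{ds}{s\,\Theta(s)}.
\end{equation}
So it suffices to show $\int_3^\infty \frac{ds}{s\,\Theta(s)} = \infty$. Here is where the upper bound $\Theta(s) \le C'(\log s + 1)$ does the work: it gives $\frac{1}{s\,\Theta(s)} \ge \frac{1}{C' s(\log s + 1)}$, and $\int^\infty \frac{ds}{s(\log s + 1)} = \infty$ since its antiderivative is $\log(\log s + 1)/C' \to \infty$. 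This is the step that genuinely uses the structural restriction that an admissible growth function cannot grow faster than a logarithm — without it $\varphi_\Theta$ could fail to be Osgood. The remaining contribution $\int_{e^{-2}}^m \frac{dr}{\varphi_\Theta(r)}$ over the constant branch is finite and irrelevant.

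I do not expect a serious obstacle; the one bit of care is making sure the monotonicity threshold and the continuous matching at $e^{-2}$ are handled cleanly (the statement's parenthetical remark that the values for $r$ not near $0$ "can be chosen appropriately" gives the needed flexibility), but the heart of the argument is the elementary substitution reducing the Osgood integral to $\int^\infty \frac{ds}{s\,\Theta(s)}$ and the comparison with $\int^\infty \frac{ds}{s\log s}$.
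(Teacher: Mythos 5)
Your proposal is correct and its core — the substitution $s=\log(e/r)$ reducing the Osgood integral to $\int_3^\infty \frac{ds}{s\,\Theta(s)}$ and the comparison with $C'(\log s+1)$ — is exactly the paper's proof. The extra verification that $\varphi_\Theta$ is increasing and vanishes at $0$ is a harmless addition the paper leaves implicit.
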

\begin{proof}
\begin{equation}
\int_0 ^{e^{-2} } \frac{d r}{\varphi_\Theta (r) } = \int_3 ^\infty \frac{dp}{p \Theta(p) } \ge \int_3 ^\infty \frac{dp}{C' p (\log p + 1) } = \infty.
\end{equation}
\end{proof}

\begin{definition}\label{assmod}
For an admissible growth function $\Theta$, $\varphi_\Theta$ defined in lemma \ref{Osgood} is called the Osgood modulus of continuity associated with $\Theta$. 
\end{definition}
As the growth of $\Theta$ becomes slower, $\varphi_\Theta$ gets closer to log-Lipschitz. Since the class of admissible growth functions contains plenty of slowly growing functions (Proposition \ref{logm}), one can find $\varphi_\Theta$ which is close to log-Lipschitz. \newline
Next, we show that if $L^p$ norm of $\nabla u$ grows mildly, then $u$ is Osgood.
\begin{lemma}[\cite{CS2021}] \label{genYud}
Suppose that $\Theta(p)$ is an admissible growth function and there exists a constant $C>0$ such that $\| \nabla u \|_{L^p (\mathbb{R}^d)} \le C p \Theta(p)$ for all $p \ge 1$. Then $u$ has a modulus of continuity $\varphi_{\Theta}$.
\end{lemma}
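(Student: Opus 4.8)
The plan is to run the classical argument that upgrades growth of the $L^p$ norms of $\nabla u$ to a modulus of continuity, choosing the integrability exponent $p$ as a function of the separation $r=|x-y|$. First I would record a localized potential bound: fixing $x,y$, letting $m=\tfrac{x+y}{2}$ and $B=B_r(m)$ (so that $x,y\in B\subseteq B_{2r}(x)\cap B_{2r}(y)$), the standard Morrey pointwise representation gives $|u(w)-\bar u_B|\le C_d\int_B\frac{|\nabla u(z)|}{|w-z|^{d-1}}\,dz$ for a.e. $w\in B$, hence by the triangle inequality
\[
|u(x)-u(y)|\le C_d\int_{B_{2r}(x)}\frac{|\nabla u(z)|}{|x-z|^{d-1}}\,dz+C_d\int_{B_{2r}(y)}\frac{|\nabla u(z)|}{|y-z|^{d-1}}\,dz .
\]

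Next, for any $p>d$ with conjugate exponent $p'$, Hölder's inequality together with an explicit polar-coordinate computation of $\int_{B_{2r}(0)}|z|^{-(d-1)p'}\,dz$ (finite precisely because $p>d$) bounds each term by $\|\nabla u\|_{L^p}\big(C_d\tfrac{p-1}{p-d}\big)^{1/p'}(2r)^{1-d/p}$; inserting the hypothesis $\|\nabla u\|_{L^p}\le Cp\,\Theta(p)$ yields $|u(x)-u(y)|\le C_d'\big(\tfrac{p-1}{p-d}\big)^{1/p'}p\,\Theta(p)\,r^{1-d/p}$. Now I optimize in $p$: for $0<r<e^{-2}$ take $p=d\log(e/r)$, so that $p>3d$ and hence $\tfrac{p-1}{p-d}\le\tfrac32$, while $r^{1-d/p}=r\,e^{\,1-1/\log(e/r)}\le er$. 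Using the admissibility of $\Theta$ — property (1) to replace $\Theta(d\log(e/r))$ by $\Theta(\log(e/r))$ up to an additive constant, together with $\Theta\ge\Theta(3)>0$ and $\Theta\to\infty$ — one obtains $p\,\Theta(p)\le C''\log(e/r)\,\Theta(\log(e/r))$, and therefore $|u(x)-u(y)|\le C\,r\log(e/r)\,\Theta(\log(e/r))=C\varphi_\Theta(r)$ for all $r<e^{-2}$.

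It remains to treat $r\ge e^{-2}$, where $\varphi_\Theta$ is a fixed positive constant, so one only needs $\sup_{x,y}|u(x)-u(y)|<\infty$: this follows from the global potential representation by splitting each integral into the ball of radius $1$ about the base point (controlled as above with a fixed $p\in(d,\infty)$) and its complement, where $|x-z|^{1-d}\le1$ and one invokes $\nabla u\in L^1$, i.e. the $p=1$ endpoint of the hypothesis. Together with the convention (already used in the introduction, where the modulus of continuity appears with an explicit constant $C$) that a modulus of continuity may carry a multiplicative constant, this gives $|u(x)-u(y)|\le C\varphi_\Theta(|x-y|)$ for all $x,y$, and $C\varphi_\Theta$ is still Osgood with the same defining divergent integral. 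The analytic skeleton is routine; the one delicate point is the bookkeeping in the middle step — checking that the kernel factor $\big(\tfrac{p-1}{p-d}\big)^{1/p'}$ stays bounded and that $\Theta(d\log(e/r))$ and $\Theta(\log(e/r))$ differ only by a constant — which is exactly what conditions (1) and (2) of Definition \ref{admissible} are tailored to supply.
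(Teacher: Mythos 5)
Your proposal is correct and is essentially the paper's argument: a Morrey/Sobolev-embedding estimate for $|u(x)-u(y)|$ with the integrability exponent chosen logarithmically in $r=|x-y|$ (you take $p=d\log(e/r)$, the paper takes $p=\log(e/r)$ and instead uses $r^{-d/\log(e/r)}\to e^d$), followed by the admissibility properties of $\Theta$ to absorb constants. Your extra bookkeeping --- proving the Morrey bound via the Riesz potential so that the $p$-dependence of the constant $\bigl(\tfrac{p-1}{p-d}\bigr)^{1/p'}$ is explicit, and treating the regime $r\ge e^{-2}$ via the $p=1$ and fixed $p>d$ cases of the hypothesis --- is a harmless refinement of the paper's terser proof, not a different method.
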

\begin{proof}
Let $p > 2d$, and we apply Morrey's inequality:
\begin{equation}
|u(x) - u(y) | \le C_d \| \nabla u \|_{L^p (\mathbb{R}^d) } |x-y| ^{1 - \frac{d}{p} },
\end{equation}
where $C_d$ depends only on the dimension $d$. For $0 < |x-y| \le e^{-2}$, take $p = 1 - \log |x-y| \ge 3$. Then one immediately can see that
\begin{equation}
\begin{split}
|u(x) - u(y) | &\le C_d \log \left ( \frac{e}{|x-y| } \right ) \Theta \left ( \log \left ( \frac{e}{|x-y| } \right ) \right )|x-y|^{1 - \frac{d}{\log e/|x-y| } } \\
& \le C \log \left ( \frac{e}{|x-y| } \right ) \Theta \left ( \log \left ( \frac{e}{|x-y| } \right ) \right )|x-y| =C \varphi_{\Theta} (|x-y|),
\end{split}
\end{equation}
since $\lim_{r\rightarrow 0^+} r^{-d/\log(e/r) } = e^d$.
\end{proof}

\subsection{Alberti-Crippa-Mazzucato construction}
Our proof crucially relies on the construction made in \cite{ACM2018}, so we briefly summarize it.
\begin{proposition}[\cite{ACM2018_2}] Let $d\ge2$, $Q = \left [ -\frac{1}{2}, \frac{1}{2} \right ]^d$. There exist a divergence-free velocity field $v \in C^\infty (\mathbb{R}_{\ge 0} \times \mathbb{R}^d)$ and a corresponding non-trivial solution $\rho \in C^\infty(\mathbb{R}_{\ge 0} \times \mathbb{R}^d)$ of the continuity equation \eqref{transport} such that
\begin{enumerate}
\item for every $t \ge 0$, $v(t, \cdot)$ and $\rho(t, \cdot)$ are bounded and supported in $Q$,
\item there is a constant $B>0$, independent of $p$ such that $ |\nabla v (t, \cdot) \|_{L^p (\mathbb{R}^d)} \le B$ for all $1 \le p \le \infty$, and
\item for every $0 < s <2$ there exist constants $C_s, c>0$ such that for every $t \ge 0$, $\|\rho(t, \cdot ) \|_{\dot{H}^s (\mathbb{R}^d) } \ge C_s e^{sct}$.
\end{enumerate}
\end{proposition}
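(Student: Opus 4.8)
The plan is to build $v$ as a self-similar, scale-halving cascade of a single smooth ``building block'' — so that the mixing it produces decays through the dyadic scales at the exponential rate $\sim 2^{-t}$ automatically — and to take $\rho$ to be a fixed smooth template carried along by it. First I would fix once and for all a divergence-free $u\in C^\infty(\mathbb R\times\mathbb R^d)$ which, together with all its derivatives, is supported in the open cube $Q^\circ$ for every $t$, which vanishes to infinite order for $t$ near $0$ and near $1$ and is $\equiv 0$ for $t\notin[0,1]$, and whose time-$1$ flow $\Phi$ realizes one step of dyadic subdivision: writing $q_1,\dots,q_{2^d}$ for the dyadic subcubes of $Q$, $c_j$ for their centres, and fixing signs $\epsilon_j\in\{\pm1\}$ with exactly half equal to $+1$, one should have $g\circ\Phi^{-1}=S[g]$ where $S[g](x):=\epsilon_j\,g\big(2(x-c_j)\big)$ for $x\in q_j$. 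I would also fix a template $\rho_0\in C_c^\infty(Q^\circ)$ with $\int\rho_0=0$ whose distribution function is symmetric about $0$ (for instance $\rho_0(x)=h(x_1)\prod_{i\ge 2}\eta(x_i)$ with $h$ odd and $\eta\ge 0$ a bump); with this choice $\rho_0$ and $S[\rho_0]$ are equimeasurable, which is the compatibility condition needed for such a $u$ to exist.

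Granting this, I would define $v$ on $[n,n+1]\times\mathbb R^d$ by placing, in each dyadic subcube $q$ of side $2^{-n}$ with centre $c_q$, the rescaled copy
\begin{equation}
v(x,t):=2^{-n}\,u\big(2^{n}(x-c_q),\,t-n\big)\qquad (x\in q),
\end{equation}
and $v:=0$ otherwise. Since $u$ and its derivatives vanish near $\partial Q$, these copies have pairwise disjoint supports and glue to a spatially smooth field; since $u$ vanishes to infinite order near $t\in\{0,1\}$, $v$ vanishes identically near every integer time, so $v\in C^\infty(\mathbb R_{\ge 0}\times\mathbb R^d)$, and each $v(t,\cdot)$ is divergence-free and supported in $\overline Q$. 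I would then let $\rho$ solve $\partial_t\rho+v\cdot\nabla\rho=0$ with $\rho(0,\cdot)=\rho_0$; smooth dependence on the data gives $\rho\in C^\infty(\mathbb R_{\ge0}\times\mathbb R^d)$, non-trivial, with $\|\rho(t,\cdot)\|_{L^\infty}=\|\rho_0\|_{L^\infty}$ and $\operatorname{supp}\rho(t,\cdot)\subset Q^\circ$. By the defining property of $u$ together with the rescaling, the flow of $v$ on $[n,n+1]$ leaves each generation-$n$ subcube invariant and subdivides the pattern inside it by one further level, so that $\rho(n,\cdot)\big|_q=\pm\rho_0\big(2^{n}(\cdot-c_q)\big)$ on each subcube $q$ of side $2^{-n}$; this is exactly the inductive statement that keeps the cascade self-consistent, and it persists because $\rho_0$ has mean zero.

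Properties (1) and (2) would then follow from a one-line scaling remark: for $w_\lambda(x):=\lambda\,u(x/\lambda)$ one has $\partial_j(w_\lambda)_i(x)=(\partial_j u_i)(x/\lambda)$, hence $\|\nabla w_\lambda\|_{L^\infty(\mathbb R^d)}=\|\nabla u\|_{L^\infty(\mathbb R^d)}$ for all $\lambda>0$; since $v(t,\cdot)$ is a disjoint union of such copies with $\lambda=2^{-n}$, one gets $\|\nabla v(t,\cdot)\|_{L^\infty(\mathbb R^d)}=\|\nabla u\|_{L^\infty(\mathbb R^d)}=:B$ for every $t$, and as $\nabla v(t,\cdot)$ is supported in $Q$ with $|Q|=1$ this forces $\|\nabla v(t,\cdot)\|_{L^p(\mathbb R^d)}\le B$ for all $1\le p\le\infty$. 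For (3) I would fix $t\in[n,n+1]$ and use that on $[n,n+1]$ the flow acts inside each generation-$n$ subcube $q$ by an incompressible diffeomorphism supported in $q^\circ$, so that $\int_q\rho(t,\cdot)=\int_q\rho(n,\cdot)=\pm2^{-nd}\int\rho_0=0$ for every such $q$. The scale-invariant fractional Poincaré--Wirtinger inequality on a cube of side $2^{-n}$ gives $\|\rho(t,\cdot)\|_{L^2(q)}^2\le C_{s,d}\,2^{-2ns}[\rho(t,\cdot)]_{\dot H^s(q)}^2$, and summing over the disjoint subcubes, using $\sum_q[\rho(t,\cdot)]_{\dot H^s(q)}^2\le\|\rho(t,\cdot)\|_{\dot H^s(\mathbb R^d)}^2$ together with the conservation $\|\rho(t,\cdot)\|_{L^2}=\|\rho_0\|_{L^2}$, yields
\begin{equation}
\|\rho(t,\cdot)\|_{\dot H^s(\mathbb R^d)}\ \ge\ c_{s,d}\,2^{ns}\|\rho_0\|_{L^2}\ \ge\ c_{s,d}\,2^{-s}\|\rho_0\|_{L^2}\,e^{s(\log 2)\,t},
\end{equation}
which is (3) with $c=\log 2$ and $C_s=c_{s,d}2^{-s}\|\rho_0\|_{L^2}$ (the computation works for every $s>0$, in particular for $0<s<2$).

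The step I expect to be the real obstacle is the very first one: producing a building block $u$ with all of these properties simultaneously. That incompressible transport preserves the distribution function makes the equimeasurability of $\rho_0$ and $S[\rho_0]$ \emph{necessary}, which is what dictates the symmetric template and the balanced choice of signs $\epsilon_j$; the hard direction is \emph{sufficiency}, namely realizing the passage $\rho_0\mapsto S[\rho_0]$ by a smooth, divergence-free isotopy of the identity that is compactly supported in $Q^\circ$, has finite $\|\nabla u\|_{L^\infty}$, and vanishes to infinite order near $\partial Q$ and near the endpoints $t=0,1$ so that the cascade assembles into a globally $C^\infty$ field. This is precisely the explicit construction of \cite{ACM2018_2}, which I would invoke rather than reprove; the only subsidiary point worth checking is that the Poincaré constant $C_{s,d}$ used above is independent of the scale $2^{-n}$ and of the number of subcubes, which is immediate from scaling and from the disjointness of the supports.
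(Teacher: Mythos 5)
This proposition is not proved in the paper at all: it is imported verbatim from \cite{ACM2018_2}, and the paper only summarizes how it is rescaled and used afterwards. Your proposal is, in effect, a reconstruction of the architecture of the cited construction (a self-similar cascade of one smooth divergence-free building block whose time-one flow refines a symmetric template by one dyadic generation), and you yourself outsource the decisive step --- the existence of that building block with uniform-in-time $W^{1,\infty}$ control, compact support in $Q^\circ$, and smooth gluing in time --- back to \cite{ACM2018_2}. So as a standalone proof nothing essential is established beyond a reduction to the very reference the statement is attributed to; but since the paper treats the statement as a black box from that reference, this is consistent with how the result is used here. The parts you do argue are sound and are a nice complement to the bare citation: the scaling identity $\nabla\bigl(\lambda u(\cdot/\lambda)\bigr)=(\nabla u)(\cdot/\lambda)$ plus $|Q|=1$ gives (1)--(2), and the cube-wise mean-zero property together with the scale-invariant fractional Poincar\'e inequality and conservation of the $L^2$ norm gives the exponential lower bound in (3) with rate $c=\log 2$.

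Two details in your sketch need repair. First, the requirement $g\circ\Phi^{-1}=S[g]$ cannot hold for arbitrary $g$: a composition with a volume-preserving map can never produce the sign flips $\epsilon_j$ for a generic $g$. The correct formulation is that the identity holds for the particular template $\rho_0$ (and hence, by the cube-by-cube conjugation, for each rescaled $\pm\rho_0$ appearing at later generations), which is exactly why you need the odd symmetry of $h$; as stated, your defining property of $u$ is vacuous. Second, your Poincar\'e/localization argument, $\sum_q[\rho]_{\dot H^s(q)}^2\le\|\rho\|_{\dot H^s(\mathbb R^d)}^2$, is a Gagliardo-seminorm statement and only covers $0<s<1$; to reach the full range $0<s<2$ claimed in the proposition you should add one line of interpolation, e.g. $\|\rho\|_{\dot H^{s_0}}\le\|\rho\|_{L^2}^{1-s_0/s}\|\rho\|_{\dot H^{s}}^{s_0/s}$ with $s_0\in(0,1)$ and $\|\rho(t)\|_{L^2}=\|\rho_0\|_{L^2}$, which upgrades the bound $\gtrsim 2^{ns_0}$ to $\gtrsim 2^{ns}$ for any $s\ge1$. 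With those amendments your outline matches the cited construction; the genuinely hard content (the smooth incompressible isotopy realizing $\rho_0\mapsto S[\rho_0]$) remains exactly the content of \cite{ACM2018_2} and is not reproved by your argument.
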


Next, we consider a sequence of cubes $Q_n$ in $\mathbb{R}^n$ of side-length $\lambda_n \searrow 0$ and center $q_n \in \mathbb{R}^d$ which are disjoint to each other, contained in a compact set, and converging to the origin. Next, we set a sequence $\tau_n \searrow 0$ and in each cube $Q_n$ define
\begin{equation}
u_n (t,x) = \frac{\lambda_n}{\tau_n} v \left (\frac{t}{\tau_n}, \frac{x - q_n}{\lambda_n} \right ), \theta_n (t,x) = \rho\left (\frac{t}{\tau_n}, \frac{x-q_n}{\lambda_n} \right ).
\end{equation} 
We note the following: for detailed computations one may refer to \cite{ACM2018}.
\begin{enumerate}
\item $\theta_n$ solves $\partial_t \theta_n + u_n \cdot \nabla_x \theta_n = 0.$
\item The distance between support of $\theta_n$ and $Q_n^c$ is at least $\lambda_n$,
\item $\| u_n \|_{\dot{W}^{1,p} (\mathbb{R}^d) } \le \frac{\lambda_n^{\frac{d}{p } }}{\tau_n}$, $\|\theta_n (0) \|_{\dot{H}^\sigma(\mathbb{R}^d) } \le \lambda_n^{\frac{d}{2} - \sigma}$, and $\| \theta_n (t) \|^2 _{\dot{H}^s (\mathbb{R}^d) } \ge   \lambda_n^{d-2s} \left (C_s^2 \exp \left (\frac{2 s c t }{\tau_n } \right ) - \frac{C}{s} \right )$ for some constant $C$, for every $0<s<1$.
\end{enumerate}

The key lemma is the following:
\begin{lemma}\label{key}
There exists $\lambda_n \searrow 0$, $\tau_n \searrow 0$ such that
\begin{enumerate}
\item $\sum_n \lambda_n < \infty$, and thus $Q_n$ can be chosen to be disjoint, contained in a compact set and converging to the origin, 
\item there exists a constant $C>0$ such that  $\sum_n \frac{\lambda_n^{\frac{d}{p} } }{\tau_n } \le C p \Theta(p)$ for all $1 \le p <\infty$, so that $u = \sum_{n} u_n$ is in $\dot{W}^{1,p}$ with $\| u \|_{\dot{W}^{1,p} } \le C p \Theta(p)$,
\item there exists $C>0$ such that $\frac{\lambda_n}{\tau_n} \le C$ for every $n$, so that $u(t, \cdot) \in L^\infty(\mathbb{R}^d)$ uniformly in $t$,
\item $\sum_n \lambda_n ^{\frac{d}{2} - \sigma} < \infty$, so that $\theta_0 = \sum_n \theta_n(0, \cdot) \in \dot{H}^\sigma(\mathbb{R}^d)$,
\item $\sum_n \lambda^{d-2s} \exp \left (\frac{2 s c t }{\tau_n } \right ) = \infty$ for every $0<s<1$ and $t>0$, so that $\theta(t, \cdot ) = \sum_{n} \theta_n(t, \cdot) \notin \dot{H}^{s}(\mathbb{R}^d)$.
\end{enumerate}
\end{lemma}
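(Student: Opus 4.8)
The plan is to reduce the whole lemma to one explicit choice of the two sequences plus a single genuine estimate. Writing $x_n := \log(1/\lambda_n)$ and $y_n := 1/\tau_n$, the five requirements become: (1) $\sum_n e^{-x_n} < \infty$; (2) $S(p) := \sum_n e^{-dx_n/p}\,y_n \le C p\,\Theta(p)$ for all $p\ge 1$; (3) $y_n \le C e^{x_n}$; (4) $\sum_n e^{-(d/2-\sigma)x_n} < \infty$; and (5) $\sum_n \exp\big(\!-(d-2s)x_n + 2sct\,y_n\big) = \infty$ for every $0<s<1$, $t>0$. I would take $x_n := M2^n$ and $y_n := x_n\,\Theta(x_n)$, that is $\lambda_n := e^{-M2^n}$ and $\tau_n := \big(M2^n\,\Theta(M2^n)\big)^{-1}$, where $M$ is the constant from Definition~\ref{admissible} (so that every argument of $\Theta$ below is $\ge M$); since $x\mapsto x\,\Theta(x)$ is increasing, both sequences are then decreasing with limit $0$.

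Conditions (1) and (4) are immediate from the (double-exponential) smallness of $\lambda_n$, and (1) is exactly what lets the cubes $Q_n$ be placed disjointly inside a fixed compact set with centres tending to the origin. Condition (3) holds since $x\,\Theta(x)\le e^{x}$ for $x$ large (recall $\Theta(x)\le C'(\log x+1)$). Condition (5) is automatic for this $y_n$ and is insensitive to the choice of $x_n$: its exponent is $x_n\big(2sct\,\Theta(x_n)-(d-2s)\big)$, which, because $\Theta(x_n)\to\infty$, tends to $+\infty$ for every fixed $s\in(0,1)$, $t>0$; already the individual terms blow up. This is the mechanism behind Theorem~\ref{main1}: it is precisely the unboundedness of $\Theta$ — the failure of log-Lipschitz regularity — that forces the loss, whereas a bounded $\Theta$ would only yield a losing estimate.

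The heart of the matter is (2). Since the $u_n$ have pairwise disjoint supports, $\|u\|_{\dot{W}^{1,p}} = \big(\sum_n\|u_n\|_{\dot{W}^{1,p}}^p\big)^{1/p} \le \big(\sum_n(\lambda_n^{d/p}/\tau_n)^p\big)^{1/p} \le \sum_n \lambda_n^{d/p}/\tau_n = S(p)$ by the embedding $\ell^1\hookrightarrow\ell^p$, so the claimed $L^p$-bound on $\nabla u$ follows once $S(p)\le Cp\,\Theta(p)$. To see the latter, observe that the admissibility condition $x\,\Theta'(x)\le C\,\Theta(x)$ forces $x\mapsto e^{-dx/p}x\,\Theta(x)$ to have a single peak located at some $x_\ast\asymp p$, while $x_{n+1}=2x_n$ makes the summand decay geometrically on either side of the index nearest $x_\ast$; hence $S(p)$ is comparable to its dominant term $\asymp x_\ast\,\Theta(x_\ast)\asymp p\,\Theta(p)$, the last step using the quasi-subadditivity $\Theta(xy)\le\Theta(x)+\Theta(y)+C$ to pass from $\Theta(x_\ast)$ to $\Theta(p)$. (Equivalently, dominate $S(p)$ by a constant times $\int_1^\infty e^{-dx/p}\Theta(x)\,dx = \tfrac{p}{d}\int_{d/p}^\infty e^{-s}\Theta(sp/d)\,ds \le \tfrac{p}{d}\int_0^\infty e^{-s}\big(\Theta(s)+\Theta(p)+C\big)\,ds \le C p\,\Theta(p)$, using $\int_0^\infty e^{-s}\Theta(s)\,ds<\infty$, which in turn follows from $\Theta(s)\le C'(\log s+1)$.) The remaining ``so that'' clauses in the statement are just the consequences of disjointness of supports together with the bullet points recorded before the lemma. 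The one real obstacle is exactly this balancing act: $\tau_n$ must be small enough that $2sct/\tau_n$ overwhelms $(d-2s)\log(1/\lambda_n)$ in (5), yet large enough that $\sum_n \lambda_n^{d/p}/\tau_n$ both converges and stays below $Cp\,\Theta(p)$ uniformly in $p$ — and it is here that both technical hypotheses of Definition~\ref{admissible} are used, quasi-subadditivity to identify the size of the dominant term and $x\,\Theta'\le C\,\Theta$ to guarantee the summand peaks sharply near $x\asymp p$.
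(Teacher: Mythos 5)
Your proposal is correct and follows essentially the same route as the paper: the same choice of scales (a doubly exponentially small $\lambda_n$ with $\tau_n^{-1}=\log(1/\lambda_n)\,\Theta(\log(1/\lambda_n))$, your $x_n=M2^n$ versus the paper's $e^n$ being immaterial), the same trivial verifications of (1), (3), (4), (5) via $\Theta(x_n)\to\infty$ and $\Theta(x)\lesssim \log x+1$, and the same key estimate for (2), locating the peak of $x\Theta(x)e^{-dx/p}$ at $x\lesssim p$ via $x\Theta'\le C\Theta$ and using quasi-subadditivity to conclude $\lesssim p\,\Theta(p)$. Your bookkeeping of the sum by geometric decay of the summand away from the peak (or the parenthetical integral comparison) is just a minor variant of the paper's integral test with a peak-correction term.
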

Note that $\theta(t, \cdot ) \in L^\infty (\mathbb{R}^d)$ uniformly in $t$. Also, since each $Q_n$ are disjoint to each other, and $\cup_n Q_n$ is contained in a compact set $K$, $\sum_{k=1} ^n u_k \rightarrow u, \sum_{k=1} ^n \theta_k \rightarrow \theta$ strongly in $L^2 ([0, T] \times K)$ for every $T>0$. Thus, for any $\psi \in C^\infty_0 ((0,\infty) \times \mathbb{R}^d)$, $0 = \int \partial_t \psi \sum_{k=1} ^n \theta_k +  \nabla_x \psi \cdot (\sum_{k=1} ^n u_k ) \sum_{k=1} ^n \theta_k \rightarrow \int \partial_t \psi \theta + \nabla_x \psi \cdot u \theta$, that is, $\theta$ is a weak solution of \eqref{transport} with $u$. Moreover, as $u$ is Osgood by Lemma \ref{Osgood}, $\theta$ is the unique integrable weak solution. Since $\theta(t,\cdot) \notin \dot{H}^s (\mathbb{R}^d)$ for any $t >0$ and any $0<s<1$, while $\theta(t, \cdot) \in L^2 (\mathbb{R}^2)$ uniformly in time, this proves the Theorem \ref{main1}.
\begin{remark} \label{anysigma}
One can extend Theorem \ref{main1} to $\sigma \ge \frac{d}{2}$ immediately: one may introduce another factor $\gamma_n \searrow 0$ and set $\theta = \sum_n \gamma_n \theta_n$. In fact, along with $\lambda_n, \tau_n$, setting $\gamma_n = \lambda_n ^\sigma$ works.
\end{remark}

\subsection{Proof of Lemma \ref{key}}
Let $\Theta$ be an admissible growth function. We set
\begin{equation}
\lambda_n = e^{-e^n}, \tau_n^{-1}  = \log (1/\lambda_n) \Theta(\log(1/\lambda_n ) ) = e^n \Theta(e^n).
\end{equation}
Obviously, condition 1, 3, and 4 of Lemma \ref{key} are satisfied. For condition 5, we see that
\begin{equation}
\begin{split}
\sum_n \lambda^{d-2s} \exp \left ( \frac{2sct} {\tau_n} \right ) &= \sum_n \exp \left ( (2s-d) \log (1/\lambda_n) + \log (1/\lambda_n) \Theta (\log (1/\lambda_n ) ) \right ) \\
&\ge C + \sum_{n \ge N} \exp ( C \log(1/\lambda_n) \Theta(\log(1/\lambda_n ) ) )  = \infty.
\end{split}
\end{equation}
It remains to verify condition 2. Since
\begin{equation}
\sum_n \frac{\lambda_n ^{\frac{d}{p} } }{\tau_n} = \sum_{n=1 } ^\infty e^{-\frac{d}{p} e^n } e^n \Theta(e^n) = \sum_{n=1} ^\infty F(e^n),
\end{equation}
where $F(x) = x \Theta(x) e^{-\frac{d}{p} x}$. Note that $\lim_{x\rightarrow \infty} F(e^x) = 0$, and $F$ is bounded; moreover, either there is $\bar{x}_p \ge 1$ such that $F(e^x)$ attains maximum at $\bar{x}_p$, $F(e^x)$ is increasing on $[1, \bar{x}_p]$, and decreasing on $(\bar{x}_p, \infty)$ or $F$ is decreasing on $\mathbb{R}_{\ge 1}$. Therefore, comparing the series with integrals, we have
\begin{equation} \label{integraltest}
\sum_{n=1} ^\infty F(e^n) \le \int_{1} ^\infty F(e^x) dx + 2F(e^{\bar{x}_p} ),
\end{equation}
where the last term of \eqref{integraltest} (if exists) comes from that for $n = 1, \cdots, [\bar{x}_p]-1$, one has $F(e^n) \le \int_{n} ^{n+1} F(e^x) dx$, and for $n = [\bar{x}_p]+2, \cdots$ one has $F(e^n) \le \int_{n-1} ^n F(e^x) dx$, while at $n = [\bar{x}_p], [\bar{x}_p]+1$, one has $F(e^n) \le F(e^{\bar{x}_p})$. We estimate each term of \eqref{integraltest}.
\paragraph{Estimate of $F(e^{\bar{x}_p} )$.} Note that $F(e^{\bar{x}_p} ) = \| F \|_{L^\infty}$, and it is attained either at the unique point $y$ such that $F'= 0$ or $y=1$. Since $F'(y) = ( \Theta(y) + y \Theta'(y) - \frac{d}{p} y \Theta(y) ) e^{-\frac{d}{p}y}$, we have
\begin{equation}
\Theta(y) + y \Theta'(y) = \frac{d}{p} y \Theta(y).
\end{equation} 
Since $\Theta$ is admissible, we have $(C+1) \Theta(y) \ge \frac{d}{p} y \Theta(y)$, that is, $y \le \frac{(C+1) }{d} p.$ Therefore, 
\begin{equation}
\begin{split}
\| F \|_{L^\infty} &= F(y) \le \frac{C+1}{d} p \Theta \left  ( \frac{C+1}{d} p  \right) \le C' p \Theta \left ( \left (M+\frac{C+1}{d} \right ) p \right) \\
& \le C' p \left ( \Theta (p) + \Theta  \left ( \left (M+\frac{C+1}{d} \right )  \right) + C \right ) \le C'' p \Theta(p)
\end{split}
\end{equation}
for large enough $p$.
\paragraph{Estimate of $\int_1 ^\infty F(e^x) dx$} We note that, by change of variable $z = e^x$,
\begin{equation}
\int_1 ^\infty F(e^x) dx = \int_1 ^\infty \Theta(e^x) e^{-\frac{d}{p} e^x } e^x dx = \int_e ^\infty\Theta(z) e^{-\frac{d}{p} z } dz.
\end{equation}
By putting $y = \frac{d}{p}z$, we have
\begin{equation}
\int_e ^\infty\Theta(z) e^{-\frac{d}{p} z } dz = \frac{p}{d} \int_{\frac{de}{p} } ^\infty \Theta \left (\frac{p}{d} y \right ) e^{-y} dy.
\end{equation}
Note that for $p > M$, $\Theta \left (\frac{p}{d} y \right ) \le \Theta(pM/d) \le \Theta (pM) \le \Theta(p) +\Theta(M) + C$ if $y \le M$, and if $y > M$ $\Theta\left (\frac{p}{d} y \right) \le \Theta(py) \le \Theta(p) + \Theta(y) + C$.  In any case, we have a constant $C'>0$ satisfying
\begin{equation}
\Theta \left (\frac{p}{d} y \right ) \le \Theta(p) + \Theta(y) + C', y \ge 1.
\end{equation}
Also, if $\frac{de}{p} \le y \le 1$, we have $\Theta \left (\frac{p}{d} y \right ) \le \Theta (p) $. Thus, we have
\begin{equation}
\begin{split}
 \frac{p}{d} \int_{\frac{de}{p} } ^\infty \Theta \left (\frac{p}{d} y \right ) e^{-y} dy &\le \frac{p}{d} \int_{\frac{de}{p}} ^1 \Theta(p) e^{-y} dy + \frac{p}{d} \int_1 ^\infty (\Theta(p) + \Theta(y) + C' ) e^{-y} dy \\
 & \le C'' p \Theta(p) 
\end{split}
\end{equation}
for some constant $C''>0$, due to the integrability of $\Theta(y) e^{-y}$.

\section{Propagation of regularity}
Still stronger regularity is propagated than $\nabla u \in L^p$ case: in the latter case, only logarithm of derivatives are propagated, provided that the initial data of \eqref{transport} $\theta_0$ has one derivative (see, for example, \cite{BN2018}.) 
\subsection{Propagation of modulus for $\theta$}
In this subsection, we establish propagation of modulus of continuity result, based on observations made in \cite{DEL2022}. We start with a result proved in \cite{DEL2022}.
\begin{lemma}[\cite{DEL2022}]
Suppose that $\varphi$ is an Osgood modulus of continuity for $u$, and define
\begin{equation}
\mathcal{M}(z) := \int_z ^m \frac{dr}{\varphi(r) },\qquad  R(z) := \exp ( - \mathcal{M}(z) ),
\end{equation}
and
\begin{equation}
[u(t) ]_{\varphi} := \sup_{x\ne y} \frac{|u(x,t) - u(y,t) | }{\varphi(|x-y| )}.
\end{equation}
Then $R$ is monotonely increasing, $\lim_{z\rightarrow 0^+} R(z) = 0$, and 
\begin{equation} \label{mumod}
|\phi^{-1} (x,t) - \phi^{-1} (y,t) | \le R^{-1} \left ( e^{\int_0 ^t [u(s) ]_{\varphi} ds } R(|x-y| ) \right ) =: \mu_{[u]_\varphi, t} (|x-y| ),
\end{equation}
where $\phi(x,t)$ is the unique flow generated by $u$: i.e. the unique solution of \eqref{ODE} and $\phi^{-1} (x,t) $ is its back-to-label map.
\end{lemma}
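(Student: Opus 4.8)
The plan is to run an Osgood--Gronwall comparison along pairs of Lagrangian trajectories. First I would record the elementary properties of $R$. Since $\varphi>0$ on its domain, $\mathcal{M}$ is $C^1$ with $\mathcal{M}'(z)=-1/\varphi(z)<0$, so $\mathcal{M}$ is strictly decreasing; the Osgood condition $\int_0 \frac{dr}{\varphi(r)}=\infty$ gives $\mathcal{M}(z)\to\infty$ as $z\to 0^+$, hence $R(z)=e^{-\mathcal{M}(z)}\to 0$, while $\mathcal{M}(m^-)=0$ gives $R(m^-)=1$. Consequently $R$ is a strictly increasing $C^1$ bijection of $(0,m)$ onto $(0,1)$ with $R'(z)=R(z)/\varphi(z)>0$, so $R^{-1}$ is well defined on $(0,1)$; I would note that when the argument $e^{\int_0^t [u(s)]_\varphi\,ds}R(|x-y|)$ exceeds $1$ the asserted bound is interpreted through a trivial estimate (or after extending $\varphi$ to $(0,\infty)$, as is done for $\varphi_\Theta$ in the applications).

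For the trajectory estimate, fix $x\neq y$ and set $a=\phi^{-1}(x,t)$, $b=\phi^{-1}(y,t)$, so that $\phi(a,\cdot)$ and $\phi(b,\cdot)$ are the integral curves of $u$ with $\phi(a,t)=x$, $\phi(b,t)=y$. Define $\delta(s):=|\phi(a,s)-\phi(b,s)|$ for $s\in[0,t]$, so that $\delta(0)=|a-b|=|\phi^{-1}(x,t)-\phi^{-1}(y,t)|$ and $\delta(t)=|x-y|$. By uniqueness of the flow generated by the Osgood field $u$ (applied backward in time, since $-u$ has the same modulus $\varphi$), $a\neq b$ forces $\phi(a,s)\neq\phi(b,s)$ for every $s\in[0,t]$, so $\delta(s)>0$ on $[0,t]$; since $s\mapsto\phi(a,s)$ is $C^1$ and $z\mapsto|z|$ is smooth away from the origin, $\delta$ is $C^1$ and
\[
|\delta'(s)|\le |u(\phi(a,s),s)-u(\phi(b,s),s)|\le [u(s)]_\varphi\,\varphi(\delta(s)).
\]

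Now I would differentiate $\log R(\delta(s))$: using $R'(z)/R(z)=1/\varphi(z)$ and $\delta'(s)\ge -[u(s)]_\varphi\,\varphi(\delta(s))$,
\[
\frac{d}{ds}\log R(\delta(s))=\frac{\delta'(s)}{\varphi(\delta(s))}\ge -[u(s)]_\varphi .
\]
Integrating from $0$ to $t$ yields $\log R(\delta(0))-\log R(\delta(t))\le \int_0^t [u(s)]_\varphi\,ds$, that is,
\[
R\big(|\phi^{-1}(x,t)-\phi^{-1}(y,t)|\big)\le e^{\int_0^t [u(s)]_\varphi\,ds}\,R(|x-y|),
\]
and applying the increasing function $R^{-1}$ gives exactly $|\phi^{-1}(x,t)-\phi^{-1}(y,t)|\le R^{-1}\big(e^{\int_0^t [u(s)]_\varphi\,ds}R(|x-y|)\big)=\mu_{[u]_\varphi,t}(|x-y|)$. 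Finally, one observes that $\mu_{[u]_\varphi,t}$ is itself increasing with $\mu_{[u]_\varphi,t}(r)\to 0$ as $r\to 0^+$, since it is a composition of such maps.

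The only real obstacle is the bookkeeping around the domain of definition: one must ensure the separation $\delta(s)$ stays inside the interval $(0,m)$ where $\varphi$ and $R$ are defined, so that the differential inequality and the inversion of $R$ are legitimate. This is handled either by invoking boundedness of $u$ (or compact support of the data) to control how large $\delta(s)$ can get, or, cleanly, by working from the outset with a $\varphi$ extended to all of $(0,\infty)$ while keeping the divergence of $\int_0 dr/\varphi$. Everything else is a routine one-dimensional comparison; in particular no Lipschitz regularity of $\varphi$ is required, because after the substitution the inequality for $\log R(\delta(s))$ is linear.
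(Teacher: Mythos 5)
Your proof is correct and follows essentially the same route as the paper, which simply invokes ``Osgood's lemma and time-reversal'': your computation of $\frac{d}{ds}\log R(\delta(s))\ge -[u(s)]_\varphi$ along the two trajectories ending at $x$ and $y$ is exactly the Osgood comparison run backward from time $t$, and your handling of positivity of $\delta(s)$ via uniqueness and of the domain of $R$ is the right bookkeeping. No gap; your write-up just makes explicit what the paper leaves to the cited reference \cite{DEL2022}.
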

\begin{proof}
This is an immediate consequence of Osgood's lemma and time-reversal.
\end{proof}
An immediate consequence is the following.
\begin{lemma}
Let $\mu_0$ be a modulus of continuity, and let $g$ be a measurable function satisfying
\begin{equation} \label{initmod}
|\theta_0 (x) - \theta_0 (y) | \le (g(x) + g(y)) \mu_0 (|x-y|).
\end{equation}
Then
\begin{equation} \label{tmod}
|\theta(x,t) - \theta(y,t) |  \le (g_t (x) + g_t (y) ) \mu_0 ( \mu_{[u]_\varphi, t} (|x-y| ) )
\end{equation}
for $g_t = g \circ \phi_t ^{-1} $.
\end{lemma}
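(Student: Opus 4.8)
The plan is to combine the Lagrangian representation of the solution with the flow-distortion estimate \eqref{mumod}. Since $u$ is divergence-free with an Osgood modulus of continuity $\varphi$, the unique integrable weak solution of \eqref{transport} is $\theta(x,t) = \theta_0(\phi^{-1}(x,t))$, where $\phi$ is the unique flow of \eqref{ODE} and $\phi^{-1}(\cdot,t)$ its back-to-label map. I would take this identity as the starting point; the case $x=y$ being trivial, assume $x \ne y$.

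First I would apply the hypothesis \eqref{initmod} at the pair of points $\phi^{-1}(x,t)$ and $\phi^{-1}(y,t)$:
\[
|\theta(x,t)-\theta(y,t)| = \big|\theta_0(\phi^{-1}(x,t)) - \theta_0(\phi^{-1}(y,t))\big| \le \big(g(\phi^{-1}(x,t)) + g(\phi^{-1}(y,t))\big)\,\mu_0\big(|\phi^{-1}(x,t)-\phi^{-1}(y,t)|\big).
\]
With $g_t = g\circ\phi_t^{-1}$, the prefactor is exactly $g_t(x) + g_t(y)$, matching the claimed bound. It then remains only to control the argument of $\mu_0$: because $\mu_0$ is a modulus of continuity, hence nondecreasing, and because the preceding lemma gives $|\phi^{-1}(x,t)-\phi^{-1}(y,t)| \le \mu_{[u]_\varphi, t}(|x-y|)$, monotonicity yields $\mu_0\big(|\phi^{-1}(x,t)-\phi^{-1}(y,t)|\big) \le \mu_0\big(\mu_{[u]_\varphi, t}(|x-y|)\big)$. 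Substituting into the previous display gives \eqref{tmod}.

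There is essentially no hard step here — the argument is a one-line composition once the representation formula and \eqref{mumod} are in hand. The only point I would make explicit is that the right-hand side of \eqref{tmod} is meaningful, i.e.\ that $\mu_{[u]_\varphi, t}$, and hence $\mu_0 \circ \mu_{[u]_\varphi, t}$, is again a genuine modulus of continuity: nondecreasing and vanishing at $0$. This follows from the properties of $R$ recorded in the preceding lemma, namely that $R$ is monotone increasing with $R(0^+)=0$, so that $R^{-1}$ is well defined and monotone; since the factor $e^{\int_0^t [u(s)]_\varphi\, ds}$ is finite, $\mu_{[u]_\varphi, t}(r) = R^{-1}\!\big(e^{\int_0^t [u(s)]_\varphi\, ds} R(r)\big)$ is nondecreasing in $r$ and tends to $0$ as $r \to 0^+$. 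With this observed, \eqref{tmod} holds for every $t \ge 0$, and composing moduli preserves the modulus-of-continuity property, which is what is used in the statement of Theorem \ref{main2}(1).
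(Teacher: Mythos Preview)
Your proof is correct and follows exactly the same route as the paper: write $\theta(x,t)=\theta_0(\phi^{-1}(x,t))$, apply \eqref{initmod} at the back-to-label points, and then use the flow-distortion bound \eqref{mumod} together with the monotonicity of $\mu_0$. Your added remark that $\mu_{[u]_\varphi,t}$ is itself a modulus of continuity is a helpful elaboration the paper leaves implicit.
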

\begin{proof}
We may simply recall that $\phi^{-1}$ is volume preserving and
\begin{equation}
\begin{split}
|\theta(x,t) - \theta(y,t) | &= | \theta_0 (\phi^{-1} (x,t) ) - \theta_0 (\phi^{-1} (y,t) ) | \\
& \le (g(\phi^{-1}(x,t) ) + g(\phi^{-1} (y,t) ) ) \mu_0 (| \phi^{-1} (x,t) - \phi^{-1} (y,t) | ) \\
& \le (g(\phi^{-1}(x,t) ) + g(\phi^{-1} (y,t) ) ) \mu_0 (\mu_{[u]_\varphi, t} (|x-y| ) ).
\end{split}
\end{equation}
\end{proof}
This immediately gives the first part of Theorem \ref{main2}.
\begin{remark} \label{explicit}
It is worth noting that the modulus of continuity $\mu_{0, \varphi} = R$ plays a special role: if $\theta_0$ has a modulus of continuity $R$, then $\theta(t)$ still has the same modulus of continuity $R$ since $\mu_{0, \varphi} \circ \mu_{[u]_\varphi, t} (r) = e^{\int_0 ^t [u(s) ]_{\varphi} ds} R(r) = e^{\int_0 ^t [u(s) ]_{\varphi} ds} \mu_{0, \varphi} (r).$ 
\end{remark}
\begin{remark} The following are explicit calculation of $\mu_{[u]_\varphi, t} (r)$ for some $\varphi$.
\begin{itemize}
\item If $u$ is Lipschitz, that is, $\varphi(z) = z$, then $\mathcal{M}(z) = \log (1/z)$ for sufficiently small $z$, $R(z) = z = R^{-1} (z)$, and thus $\mu_{[u]_\varphi, t} (r) = e^{\int_0 ^t [u(s) ]_{\varphi} ds } r.$ Thus, if $\theta_0$ is Lipschitz or Holder, then $\theta(t)$ retains the same modulus of continuity.
\item If $u$ is log-Lipschitz, that is, $\varphi(z) = z \log (1/z)$, then $\mathcal{M}(z) = \log \log(1/z)$ for sufficiently small $z$, $R(z) = \frac{1}{\log (1/z) }$, $R^{-1} (z) = e^{-1/z}$, and thus $\mu_{[u]_\varphi, t} (r) = r ^{\frac{1}{\exp(\int_0 ^t [u(s) ]_\varphi ds ) } }.$ Thus, if $\theta_0$ is Lipschitz or Holder, then $\theta(t)$ remains Holder, with its Holder exponent decreasing over time.
\item If $\varphi (z) = z \log (1/z) \log_2 (1/z) \cdots \log_n (1/z)$ for sufficiently small $z$, then $\mathcal{M}(z) = \log_{n+1} (1/z)$, $R(z) = \frac{1}{\log_n (1/z)}$, $R^{-1}(z) = \frac{1}{e_n(1/z)}$ where $e_n$ is the inverse of $\log_n$. Therefore, $\mu_{[u]_\varphi, t} (r) = \frac{1}{e_{n-1} \left ( \left ( \log_{n-1} (1/r) \right )^{1/\exp(\int_0^t [u(s) ]_\varphi ds ) } \right ) }$. 
\item If $\varphi(z) = (1-\alpha) z^\alpha$, $\alpha \in (0,1)$, then $\mathcal{M}(z) = 1-z^{1-\alpha}$, $R(z) = e^{-1} e^{z^{1-\alpha} }$, $R^{-1}(z) = \frac{1}{1-\alpha} \log (ez)$, and $\mu_{[u]_{\varphi, t} } (r) = \frac{1}{1-\alpha} \left (\int_0 ^t [u]_{\varphi, t} (s) ds + r^{1-\alpha } \right )$. In particular, $\mu_{[u]_{\varphi, t} }$  does not vanish as $r\rightarrow 0$, and therefore $\mu_0 \circ \mu_{[u]_{\varphi, t} }$ is not a modulus of continuity in general. 
\end{itemize}
Regarding the vanishing rate of $\mu_{[u]_{\varphi, t} } (r) $ for $\varphi (z) = z \log (1/z) \log_2 (1/z) \cdots \log_n (1/z)$, we have the following result.
\end{remark}
\begin{lemma} \label{lognlipschitzreg}
Let $\varphi_n (z) = z \log (1/z) \log_2 (1/z) \cdots \log_n (1/z)$ and define 
\begin{equation} \label{modun}
\mu_{n, t} (r) := \frac{1}{e_{n-1} \left ( \left ( \log_{n-1} (1/r) \right )^{1/\exp(\int_0^t [u(s) ]_\varphi ds ) } \right ) }.
\end{equation}
\begin{itemize}
\item $\mu_{n, t} $ modulus of continuity is worse than any Holder modulus of continuity: $\lim_{r\rightarrow 0^+} \frac{\mu_{n, t} (r) }{r^\alpha} = \infty$ for any $\alpha \in (0, 1]$ for $n \ge 2$.
\item $\mu_{n, t} $ modulus of continuity is better than any logarithmic modulus: $\lim_{r\rightarrow 0^+} \frac{\mu_{n, t } (r) }{ 1/(\log (1/r) )^a } = 0$ for any $a \ge 1$.
\end{itemize}
\end{lemma}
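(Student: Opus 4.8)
The plan is to turn both asymptotics into elementary growth comparisons for the iterated exponential $e_k$ via the substitution $y:=\log_{n-1}(1/r)$. Recall that $e_m=\exp\circ\, e_{m-1}$ (immediate from $e_m=\log_m^{-1}$ and $\log_m=\log_{m-1}\circ\log$), with the convention $e_0=\log_0=\mathrm{id}$ used only when $n=2$, and write $\beta:=\exp\!\big(-\int_0^t[u(s)]_\varphi\,ds\big)$, so $\beta\in(0,1]$, with $\beta<1$ as soon as $t>0$ and $u$ is genuinely $x$-dependent (for $\beta=1$ one has $\mu_{n,t}(r)=r$ and there is nothing to prove). As $r\to0^+$ we have $y\to+\infty$, and moreover $1/r=e_{n-1}(y)$, $\log(1/r)=e_{n-2}(y)$, and $\mu_{n,t}(r)=1/e_{n-1}(y^\beta)$. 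Hence the first assertion becomes
\[
\frac{\mu_{n,t}(r)}{r^\alpha}=\frac{(e_{n-1}(y))^\alpha}{e_{n-1}(y^\beta)}\longrightarrow\infty\qquad(y\to\infty,\ n\ge2),
\]
and the second becomes
\[
\frac{\mu_{n,t}(r)}{(\log(1/r))^{-a}}=\frac{(e_{n-2}(y))^a}{e_{n-1}(y^\beta)}\longrightarrow0\qquad(y\to\infty).
\]

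For the first assertion I would prove, by induction on the height $k\ge1$, that $(e_k(y))^\alpha/e_k(y^\beta)\to\infty$ for every $\alpha>0$ and every fixed $\beta\in(0,1)$. The base case $k=1$ reads $e^{\alpha y-y^\beta}\to\infty$, which holds because $y^\beta=o(y)$ when $\beta<1$, so $\alpha y-y^\beta=y(\alpha-y^{\beta-1})\to+\infty$. For the inductive step, using $e_k(\cdot)=\exp(e_{k-1}(\cdot))$,
\[
\frac{(e_k(y))^\alpha}{e_k(y^\beta)}=\exp\!\big(\alpha\,e_{k-1}(y)-e_{k-1}(y^\beta)\big),
\]
and the inductive hypothesis at height $k-1$ (with exponent $1$) gives $e_{k-1}(y^\beta)=o(e_{k-1}(y))$, so the exponent equals $e_{k-1}(y)\big(\alpha-e_{k-1}(y^\beta)/e_{k-1}(y)\big)\to+\infty$. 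Taking $k=n-1$ yields the claim. This is precisely where the hypotheses enter: one needs $n\ge2$ so that $k=n-1\ge1$ (a genuine exponential is present), and one needs $\beta<1$ strictly; for $n=1$ one merely has $\mu_{1,t}(r)=r^\beta$, a Hölder modulus, and the first bullet indeed fails.

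For the second assertion I would peel off exponentials by repeatedly taking logarithms. If $n=2$ the claim is $y^a/e^{y^\beta}\to0$, which holds for any $\beta>0$ since $\exp$ of a positive power of $y$ outgrows every polynomial in $y$. For $n\ge3$, the logarithm of $(e_{n-2}(y))^a/e_{n-1}(y^\beta)$ equals $a\,e_{n-3}(y)-e_{n-2}(y^\beta)$ (using $\log e_m=e_{m-1}$), which tends to $-\infty$ as soon as $e_{n-2}(y^\beta)\gg e_{n-3}(y)$, i.e. as soon as $(e_{(n-1)-2}(y))^1/e_{(n-1)-1}(y^\beta)\to0$ — the same statement with $n$ lowered by one (and $a=1$). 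Descending to the base case $n=2$ closes the induction; only $\beta>0$ is used here, so the second bullet holds for all $t\ge0$.

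The content is entirely elementary: exponentials outgrow polynomials, and replacing the argument of an iterated exponential of height $\ge1$ by a fixed power $<1$ of itself collapses its size. The only real work, and the main potential for error, is the bookkeeping — tracking the tower heights $e_k$, $e_{k\pm1}$ consistently through the substitution and the two nested inductions, and isolating the single place ($n\ge2$ together with $\beta<1$ strict) where the hypotheses of the lemma are genuinely needed.
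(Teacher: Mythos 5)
Your proof is correct, and it takes a genuinely different (though related in spirit) route from the paper. The paper reduces each bullet, in one shot, to a comparison at the level of $\log_{n-1}$: for the first bullet it shows $\log_{n-1}(1/r^{\alpha}) \big/ \bigl(\log_{n-1}(1/r)\bigr)^{\beta} \to \infty$ for small $\alpha$, and for the second that $\log_{n-1}\bigl((\log(1/r))^{a}\bigr) \big/ \bigl(\log_{n-1}(1/r)\bigr)^{\beta} \to 0$, in both cases invoking the admissibility of $\log_{n-2}$ (the approximate subadditivity $\log_{n-2}(xy)\le \log_{n-2}x+\log_{n-2}y+C$ from Proposition \ref{logm}) to absorb the factors $\alpha$ and $a$. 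You instead substitute $y=\log_{n-1}(1/r)$ and run an induction on the tower height, using only $\log e_m = e_{m-1}$ and ``exponentials beat powers'' at each level; this avoids the admissibility machinery entirely and makes explicit a step the paper leaves implicit, namely why the comparison of the arguments of $e_{n-1}$ upgrades to a comparison of the values (your exponent computations $\alpha e_{k-1}(y)-e_{k-1}(y^{\beta})\to+\infty$ and $a\,e_{n-3}(y)-e_{n-2}(y^{\beta})\to-\infty$ do exactly this). The trade-off is that the paper's argument is shorter once Proposition \ref{logm} is available, while yours is self-contained. One small caveat: your parenthetical that for $\beta=1$ ``there is nothing to prove'' is not quite right --- when $\beta=1$ one has $\mu_{n,t}(r)=r$ and the first bullet is in fact false for $\alpha<1$; like the paper (which silently fixes $\beta\in(0,1)$), you should simply state that the first bullet is understood under the nondegeneracy assumption $\int_0^t [u(s)]_{\varphi}\,ds>0$, i.e.\ $\beta<1$. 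This does not affect the validity of your argument.
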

\begin{proof}
For the first claim, it suffices to show that $\lim_{r\rightarrow 0^+} \frac{ \log_{n-1} 1/r^\alpha} {(\log_{n-1} 1/r)^\beta} = \infty$ for $\alpha >0$ sufficiently small and $\beta \in (0, 1)$. For $n= 2$, it obviously holds. For $n\ge 3$, as $\log_{n-2} x$ is an admissible growth function, $\log_{n-1} (1/r) \le \log_{n-1} (1/r^\alpha) + \log_{n-2} 1/\alpha + C$ for sufficiently small $\alpha>0$ and a constant $C>0$. Thus, $\frac{ \log_{n-1} 1/r^\alpha} {(\log_{n-1} 1/r)^\beta} \ge \frac{\log_{n-1} 1/r^\alpha }{(\log_{n-1} (1/r^\alpha) + C' )^\beta} \rightarrow \infty$ as $r\rightarrow 0^+$. 

For the second claim, it suffices to show that $\lim_{r\rightarrow 0^+} \frac{ \log_{n-1} (\log(1/r) )^a }{(\log_{n-1} (1/r) )^\beta } = 0$ for any $\beta \in (0,1)$ and $a\ge 1$ sufficiently large, and again it is obvious for $n=2$. For $n \ge 3$, since $\log_{n-2}$ is an admissible growth function, we have $\log_{n-1} (\log(1/r) )^a \le \log_{n-2} a + \log_n (1/r) + C$ for sufficiently large $a \ge1$ and a constant $C>0$, and the desired estimate follows.
\end{proof}
\subsection{Propagation of Sobolev-type functionals}
Finally, we prove the second part of Theorem \ref{main2}. Our argument is inspired by that of \cite{BN2018}. We use the layer-cake representation, \eqref{tmod}, and monotonicity of $\mu_{[u]_\varphi, t}$ on $t$, and volume-preserving property of the flow $\phi_t^{-1}$ to obtain
\begin{equation}
\begin{split}
& \int_{\mathbb{R}^d} {| \theta(x, t) - \theta(x+h, t)|^2 }  dx = 2 \int_0 ^\infty r \left | \left \{ x : | \theta(x, t) - \theta(x+h, t) | > r \right \} \right | dr \\
&\le 2 \int_0 ^\infty r \left | \left \{ x : g_t (x) + g_t (x+h) > r / \mu_0 \circ \mu_{[u]_\varphi, T} (|h| ) \right \} \right | dr \\
&= 2 (\mu_0 \circ \mu_{[u]_\varphi , T} (|h| ))^2 \int_0^\infty r | \{ x: g_t (x) + g_t (x+h) > r \} | dr \\
& \le 2 (\mu_0 \circ \mu_{[u]_\varphi , T} (|h| ))^2 \int_0 ^\infty r | \{x:  2 g_t (x) > r \} | dr \le C (\mu_0 \circ \mu_{[u]_\varphi , T} (|h| ))^2 \| g \|_{L^2(\mathbb{R}^d)} ^2.
\end{split}
\end{equation}
Therefore, we have
\begin{equation}
\begin{split}
&\int_{B_1 (0) } \int_{\mathbb{R}^d} \frac{| \theta(x, t) - \theta(x+h, t)|^2 }{|h|^d  } \mu_0 \circ \mu_{[u]_\varphi, t } (|h| )^{-1} dx dh \\
&\le  C\| g \|_{L^2(\mathbb{R}^d) } ^2 \int_{B_1 (0) } \frac{\mu_0 \circ \mu_{[u]_\varphi , T} (|h| )}{|h|^d} dh \le C \| g \|_{L^2 (\mathbb{R}^d) } ^2 \int_0 ^1 \frac{\mu_0 \circ \mu_{[u]_\varphi , T} (r ) }{r} dr = C A(u) \| g \|_{L^2(\mathbb{R}^d) } ^2.
\end{split}
\end{equation}
\begin{remark}
In the case of $\varphi(z) = z \log (1/z) \log_2 (1/z) \cdots \log_n (1/z)$, by Lemma \ref{lognlipschitzreg}, $\mu_{[u]_\varphi, t} = o ( 1/\log(1/r) ^a ) $ for any $a \ge 1$: therefore we have
\begin{equation}
\int_{B_1 (0)} \int_{\mathbb{R}^d} \frac{|\theta(x,t) - \theta(x+h, t) |^2}{|h|^d} \mu_{[u]_\varphi, t} (|h|)^{-1} dx dh \ge C \int_{B_1 (0) }\int_{\mathbb{R}^d} \frac{|\theta(x,t) - \theta(x+h, t) |^2}{|h|^d (\log(1/|h|))^{1-p} } dx dh 
\end{equation}
for some $C>0$ and any $p >1$. In this sense, transport equation with Osgood velocity field $u$ with modulus of continuity $\varphi$ propagates more regularity than logarithm of a derivative.
\end{remark}

\section{Stability result for 2D Euler equation}
\subsection{An interpolation lemma}
We start with an interpolation lemma, which is analogous to Proposition 3.5 of \cite{BN2018}.
\begin{lemma}\label{interpolation}
Let $\varepsilon \in (0,1)$, and let $\mu:(0, 1) \rightarrow \mathbb{R}_{>0}$ be a continuous increasing function. Then there exists a constant $C>0$ such that the following holds.
\begin{equation} \label{interpolation}
\begin{split}
C^{-1} \| f \|_{L^2 (\mathbb{R}^2 )}^2 &\le  \mu (\varepsilon ) \int_{B_1 (0) } \int_{\mathbb{R}^d} \frac{|f(x+h) - f(x) |^2}{|h|^d } \mu (|h| ) ^{-1} dx dh \\
& + |\log \varepsilon | \frac{\|f\|_{L^2 (\mathbb{R}^2 ) } ^2 }{\log \left (  2 + \frac{\|f \|_{L^2(\mathbb{R}^d ) } ^2  }{\| f \|_{\dot{H}^{-1} (\mathbb{R}^d )  } ^2  } \right ) } .
\end{split}
\end{equation}
\end{lemma}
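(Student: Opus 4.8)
The plan is to split the $L^2$ norm of $f$ into a "high-frequency" part, controlled by the Gagliardo-type seminorm appearing on the right, and a "low-frequency" part, controlled by the interpolation between $\|f\|_{L^2}$ and $\|f\|_{\dot H^{-1}}$. Working on the Fourier side, write $\|f\|_{L^2}^2 = \int_{|\xi| \le N} |\hat f(\xi)|^2 d\xi + \int_{|\xi| > N} |\hat f(\xi)|^2 d\xi$ for a threshold $N$ to be optimized at the end. For the high-frequency piece, I would use the standard identity (up to a dimensional constant) $\int_{\mathbb{R}^d}\int_{\mathbb{R}^d} \frac{|f(x+h)-f(x)|^2}{|h|^{d}}\,\mu(|h|)^{-1}\,dx\,dh \asymp \int_{\mathbb{R}^d} |\hat f(\xi)|^2 \Big( \int_{\mathbb{R}^d} \frac{|e^{ih\cdot\xi}-1|^2}{|h|^{d}} \mu(|h|)^{-1} dh\Big) d\xi$, and lower-bound the inner multiplier. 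Because $\mu$ is increasing, restricting the $h$-integral to the annulus $|h| \sim 1/|\xi|$ (intersected with $B_1(0)$, which is harmless once $|\xi| \ge 1$) and using $|e^{ih\cdot\xi}-1| \gtrsim 1$ there gives a lower bound of order $\mu(1/|\xi|)^{-1}$, hence $\mu(\varepsilon)^{-1}$ for all $|\xi| \ge 1/\varepsilon$; so choosing $N = 1/\varepsilon$ yields $\int_{|\xi|>1/\varepsilon}|\hat f(\xi)|^2 d\xi \le C\,\mu(\varepsilon)\,[\text{seminorm}]$, which is exactly the first term on the right-hand side.

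For the low-frequency piece $\int_{|\xi|\le 1/\varepsilon}|\hat f(\xi)|^2 d\xi$, I would further split at a second, smaller threshold $\delta \in (0, 1/\varepsilon)$. On $|\xi| \le \delta$ we estimate crudely by $\|f\|_{\dot H^{-1}}^2$ times $\sup_{|\xi|\le\delta}|\xi|^2 = \delta^2$, i.e. $\int_{|\xi|\le\delta}|\hat f|^2 \le \delta^2 \|f\|_{\dot H^{-1}}^2$. On the dyadic range $\delta \le |\xi| \le 1/\varepsilon$ we bound $\int |\hat f|^2 \le \|f\|_{L^2}^2$, but we want to keep a logarithmic gain; the point is that this shell contributes $\|f\|_{L^2}^2$ only with a multiplicative constant that we can trade against $|\log\varepsilon|$. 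Concretely, after choosing $\delta$ so that $\delta^2\|f\|_{\dot H^{-1}}^2$ and the high-frequency remainder are comparable to $\|f\|_{L^2}^2$ divided by the logarithm in the denominator, one checks that $\delta = \varepsilon^{O(1)}$ times a power of the ratio $\|f\|_{L^2}/\|f\|_{\dot H^{-1}}$, so that $|\log\delta| \lesssim |\log\varepsilon|(1 + \log(\|f\|_{L^2}/\|f\|_{\dot H^{-1}}))$, and summing the at most $|\log\varepsilon| + \log(\ldots)$ dyadic shells each carrying a controlled fraction of $\|f\|_{L^2}^2$ produces exactly the second term $|\log\varepsilon|\,\|f\|_{L^2}^2 / \log(2 + \|f\|_{L^2}^2/\|f\|_{\dot H^{-1}}^2)$. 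I would optimize $\delta$ at the very end; by scaling one may normalize $\|f\|_{L^2}=1$ so only the ratio matters.

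The main obstacle I anticipate is the bookkeeping in the low-frequency interpolation: one must choose $\delta$ so that the bound from $\|f\|_{\dot H^{-1}}$ kicks in before the number of intermediate dyadic shells exceeds $C(|\log\varepsilon| + \log(\|f\|_{L^2}/\|f\|_{\dot H^{-1}}))$, and one must verify that the $+2$ inside $\log(2+\cdot)$ correctly absorbs the regime $\|f\|_{\dot H^{-1}} \gtrsim \|f\|_{L^2}$, where the logarithm is bounded below and the estimate degenerates to the trivial $\|f\|_{L^2}^2 \lesssim |\log\varepsilon|\,\|f\|_{L^2}^2$. A secondary technical point is justifying the Fourier-side lower bound for the Gagliardo seminorm with the weight $\mu(|h|)^{-1}$ and the restriction $h \in B_1(0)$; since $\mu$ is only assumed continuous and increasing (not Osgood, not even a modulus of continuity vanishing at $0$), one cannot integrate against $1/\mu$ near the origin, but this is precisely why the annular lower bound $|h|\sim 1/|\xi|$ with $|\xi|\ge 1/\varepsilon$ suffices and no integrability of $1/\mu$ at $0$ is needed. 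Once these two points are handled, combining the three pieces and choosing the thresholds gives \eqref{interpolation}.
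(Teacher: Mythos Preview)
Your high-frequency argument is correct and gives a clean alternative to the paper's approach: lower-bounding the Fourier multiplier $\int_{B_1}\frac{|e^{ih\cdot\xi}-1|^2}{|h|^d}\mu(|h|)^{-1}\,dh$ by restricting to the annulus $|h|\sim 1/|\xi|$ and using monotonicity of $\mu$ indeed yields $\int_{|\xi|>1/\varepsilon}|\hat f|^2 \le C\mu(\varepsilon)\,[\text{seminorm}]$. The paper does not do this; it instead writes $f = f*\psi_\delta + (f-f*\psi_\delta)$ with $\psi$ supported in an annulus, bounds $\|f-f*\psi_\delta\|_{L^2}^2$ by the restricted Gagliardo integral over $\delta/3\le|h|\le\delta$, and then \emph{integrates the resulting pointwise-in-$\delta$ inequality over $\delta\in(\varepsilon,1)$ with weight $\mu(\delta)^{-1}/\delta$} to introduce $\mu$. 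Your direct Fourier split avoids this integration step entirely, which is a genuine simplification.

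However, your low-frequency argument has a gap. After splitting $\int_{|\xi|\le 1/\varepsilon}|\hat f|^2$ at a threshold $\delta$, you bound $\int_{|\xi|\le\delta}|\hat f|^2\le\delta^2\|f\|_{\dot H^{-1}}^2$ and $\int_{\delta<|\xi|\le1/\varepsilon}|\hat f|^2\le\|f\|_{L^2}^2$; but the second bound is simply $\|f\|_{L^2}^2$, and no choice of $\delta$ can improve it to $\|f\|_{L^2}^2/\log(2+\|f\|_{L^2}^2/\|f\|_{\dot H^{-1}}^2)$. Your remark that ``summing the at most $|\log\varepsilon|+\log(\ldots)$ dyadic shells each carrying a controlled fraction of $\|f\|_{L^2}^2$'' yields the desired term is not right: the dyadic pieces sum to at most $\|f\|_{L^2}^2$ regardless of how many there are, so the number of shells buys nothing. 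The fix is to insert and remove a logarithmic weight: since $\log(2+|\xi|)\le C|\log\varepsilon|$ on $|\xi|\le1/\varepsilon$, one has
\[
\int_{|\xi|\le 1/\varepsilon}|\hat f|^2\,d\xi \le C|\log\varepsilon|\int_{\mathbb{R}^d}\frac{|\hat f(\xi)|^2}{\log(2+|\xi|)}\,d\xi,
\]
and the last integral is then bounded by $C\|f\|_{L^2}^2/\log\bigl(2+\|f\|_{L^2}^2/\|f\|_{\dot H^{-1}}^2\bigr)$ by splitting at $\nu=\|f\|_{L^2}/\|f\|_{\dot H^{-1}}$ exactly as in the paper. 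With this correction, your Fourier-side proof goes through and is arguably shorter than the paper's mollifier-plus-integration argument.
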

\begin{proof}
We closely follow the proof of Proposition 3.5 of \cite{BN2018}. We first fix $\delta \in (0,1)$, $\psi \in C^\infty_c (\mathbb{R}^d)$, a radial function, with $\psi \equiv 1$ in $B_{2/3} \setminus B_{1/2}$, $\psi \equiv 0$ in $(B_{5/6} \setminus B_{1/3} )^c$, and $\int \psi_{\mathbb{R}^d} = 1$, and write $\psi_\delta (x) =\delta^{-d} \psi(x/\delta)$. We decompose $f$ by
\begin{equation}
f = f*\psi_\delta + (f - f*\psi_\delta)
\end{equation}
and treat each term separately. We have
\begin{equation}
\begin{split}
\| f - f*\psi_\delta \|_{L^2}^2 &= \int_{\mathbb{R}^d}  \left | \int_{\mathbb{R}^d}  (f(x+h) - f(x) ) \psi_\delta (|h|) dh \right |^2 dx \\
&\le \int_{\mathbb{R}^d} \int_{\delta/3 \le |h| \le \delta}  \frac{|f(x+h)-f(x)|^2}{\delta^d} dh \| \psi\|_{L^2} ^2 dx \\
&\le C \int_{\delta/3 \le |h| \le \delta} \int_{\mathbb{R}^d} \frac{|f(x+h) - f(x) |^2}{|h|^d } dx dh, \\
\|f * \psi_\delta \|_{L^2} ^2 &= \|  \hat{\psi_\delta} \hat{f} \|_{L^2} ^2 \le \sup_{\xi} | \log (2+|\xi|) | |\hat{\psi} (\delta \xi ) |^2 \int_{\mathbb{R}^d} \frac{|\hat{f}(\xi)|^2}{\log (2+|\xi | ) } d\xi \\
&\le \max(\sup_{|\xi|\le 2} 2 \log 2 |\hat{\psi} (\delta \xi)|^2 , \sup_{|\xi|>2} 2 \log |\xi| |\hat{\psi} (\delta \psi )|^2 )\int_{\mathbb{R}^d} \frac{|\hat{f}(\xi)|^2}{\log (2+|\xi | ) } d\xi \\
&\le C \log (\max (1/2,1/\delta ) ) \int_{\mathbb{R}^d} \frac{|\hat{f}(\xi)|^2}{\log (2+|\xi | ) } d\xi.
\end{split}
\end{equation}
For the treatment of $ \int_{\mathbb{R}^d} \frac{|\hat{f}(\xi)|^2}{\log (2+|\xi | ) } d\xi$, we split the integral by low and high frequencies and optimize:
\begin{equation}
\begin{split}
 \int_{\mathbb{R}^d} \frac{|\hat{f}(\xi)|^2}{\log (2+|\xi | ) } d\xi &=  \int_{|\xi| \le \nu} \frac{|\xi|^2}{\log (2+|\xi | ) }|\xi|^{-2} |\hat{f}(\xi)|^2 d\xi +  \int_{|\xi| > \nu} \frac{|\hat{f}(\xi)|^2}{\log (2+|\xi | ) } d\xi \\
 & \le  \frac{\nu^2}{\log (2+\nu ) } \int_{|\xi| \le  \nu} |\xi|^{-2}|\hat{f}(\xi)|^2  d\xi + \frac{1}{\log(2+\nu)} \int_{|\xi| \ge \nu} |\hat{f}(\xi)|^2 d \xi \\
 &\le C \frac{\nu^2}{\log (2+\nu^2 ) } \| f \|_{\dot{H}^{-1} }^2 +\frac{C}{\log(2+\nu^2) } \| f \|_{L^2} ^2,
\end{split}
\end{equation}
and taking $\nu = \| f\|_{L^2} / \| f \|_{\dot{H}^{-1} }$ gives 
\begin{equation}
 \int_{\mathbb{R}^d} \frac{|\hat{f}(\xi)|^2}{\log (2+|\xi | ) } d\xi \le \frac { C \| f \|_{L^2} ^2 }{\log \left (  2 + \frac{\|f \|_{L^2(\mathbb{R}^d ) }^2  }{\| f \|_{\dot{H}^{-1} (\mathbb{R}^d )  }^2  } \right )}.
\end{equation} 
Thus, we have
\begin{equation} \label{twoparts}
\begin{split}
\| f \|_{L^2} ^2 &\le  C \int_{\delta/3 \le |h| \le \delta} \int_{\mathbb{R}^d} \frac{|f(x+h) - f(x) |^2}{|h|^d } dx dh \\
& + C\log (\max (1/2,1/\delta ) )\frac { \| f \|_{L^2} ^2 }{\log \left (  2 + \frac{\|f \|_{L^2(\mathbb{R}^d ) }^2  }{\| f \|_{\dot{H}^{-1} (\mathbb{R}^d )  }^2  } \right )}.
\end{split}
\end{equation}
Integrating $\delta$ over $(\varepsilon, 1)$ with weight $\frac{\mu(\delta)^{-1}}{\delta}$ gives us the following:
\begin{equation}
\begin{split}
\| f \|_{L^2}^2 & \le C \frac{1}{\int_\varepsilon ^1 \frac{\mu(\delta)^{-1} }{\delta } d \delta } \int_{\varepsilon} ^1 \int_{\delta/3 \le |h| \le \delta} \int_{\mathbb{R}^d} \frac{|f(x+h) - f(x) |^2}{|h|^d } dx dh \frac{\mu(\delta)^{-1}}{\delta} d \delta \\
&+ C \frac{1}{\int_\varepsilon ^1 \frac{\mu(\delta)^{-1} }{\delta } d \delta } \int_\varepsilon ^1 \log (\max (1/2,1/\delta ) )\frac { \| f \|_{L^2} ^2 }{\log \left (  2 + \frac{\|f \|_{L^2(\mathbb{R}^d ) }^2  }{\| f \|_{\dot{H}^{-1} (\mathbb{R}^d )  } ^2 } \right )} \frac{\mu(\delta)^{-1}}{\delta} d \delta \\
& \le C \frac{\int_\varepsilon ^1 \frac{d\delta}{\delta } } {\int_\varepsilon ^1 \frac{\mu(\delta)^{-1} }{\delta } d \delta } \int_{B_1 (0) } \int_{\mathbb{R}^d} \frac{|f(x+h) - f(x) |^2}{|h|^d } dx dh  + C | \log \varepsilon |  \frac { \| f \|_{L^2} ^2 }{\log \left (  2 + \frac{\|f \|_{L^2(\mathbb{R}^d ) } ^2 }{\| f \|_{\dot{H}^{-1} (\mathbb{R}^d )  }^2  } \right )} .
\end{split}
\end{equation}
Since $\mu$ is increasing, we have $\frac{\int_\varepsilon ^1 \frac{d\delta}{\delta  } } {\int_\varepsilon ^1 \frac{\mu(\delta)^{-1} }{\delta } d \delta } \le \mu(\varepsilon)$, and \eqref{interpolation} follows.
\end{proof}
\subsection{Proof of Theorem \ref{Euler}}
From Lemma \ref{Osgood} and Lemma \ref{genYud}, we see that $u_1, u_2$ have modulus of continuity $\varphi_{\Theta_n}$ as in \eqref{Eulermodconti}, with 
\begin{equation}
[u_i ]_{\varphi_{\Theta_n} } \le M \| \omega_{i,0} \|_{Y_{\Theta_n} }, i=1,2.
\end{equation}
Therefore, there is $C_1, C_2>0$ such that
\begin{equation} \label{mu1}
\mu_{[u_i]_{\varphi_{\Theta_n } , t} } (r) \le \mu_{\omega, n, t}  (r), i=1,2.
\end{equation}
Next, we have the following stability estimate of velocity for Euler equation:
\begin{proposition}[\cite{KL2022}]
For some constant $C_1, C_2>0$, we have
\begin{equation} \label{mu2}
\| u_1 (t) - u_2 (t) \|_{L^2} ^2 \le \mu_{\omega, n, t}  ( \| u_{0,1} - u_{0,2} \|_{L^2} ^2 ) ,
\end{equation}
\end{proposition}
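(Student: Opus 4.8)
The plan is to derive a closed Osgood differential inequality for $E(t) := \|u_1(t) - u_2(t)\|_{L^2(\mathbb{R}^2)}^2$ and integrate it with Osgood's lemma; this is the Yudovich uniqueness/stability argument adapted to the generalized class (essentially Proposition 18 of \cite{KL2022}), and it is the remaining input needed for Theorem \ref{Euler}. One may assume $\|u_{0,1} - u_{0,2}\|_{L^2}$ is small; for larger data the estimate is trivial (with $C_1, C_2$ chosen suitably), since $E(t) \le 4\max_i \|u_{0,i}\|_{L^2}^2$ for all $t$ by conservation of the $L^p$ norms $\|\omega_i(t)\|_{L^p} = \|\omega_{0,i}\|_{L^p}$.

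First I would record two uniform-in-time bounds coming from $\omega_{0,i} \in Y_{\Theta_n}$. Writing $v := u_1 - u_2 = \nabla^\perp\Delta^{-1}(\omega_1 - \omega_2)$ and using $\|\omega_i(t)\|_{L^p} = \|\omega_{0,i}\|_{L^p} \le \Theta_n(p)\max_i\|\omega_{0,i}\|_{Y_{\Theta_n}}$ together with the Calder\'on--Zygmund bound $\|\nabla\nabla^\perp\Delta^{-1}f\|_{L^p} \le Cp\|f\|_{L^p}$ (operator norm linear in $p$), one gets
\begin{equation}
\|\nabla u_2(t)\|_{L^p(\mathbb{R}^2)} \le Cp\,\Theta_n(p)\,\max_i\|\omega_{0,i}\|_{Y_{\Theta_n}}, \qquad 1 \le p < \infty ;
\end{equation}
and from $|\nabla^\perp\Delta^{-1}f(x)| \le C\int |x-y|^{-1}|f(y)|\,dy$, split over $|x-y|<1$ and $|x-y|>1$ with H\"older's inequality at two fixed exponents $q_1 < 2 < q_2$, one gets the uniform bound $\|v(t)\|_{L^\infty(\mathbb{R}^2)} \le C\max_i\|\omega_{0,i}\|_{Y_{\Theta_n}} =: M_*$.

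Next comes the core energy estimate. Since $v$ is divergence-free and satisfies $\partial_t v + u_1\cdot\nabla v + v\cdot\nabla u_2 + \nabla(p_1 - p_2) = 0$, testing against $v$ annihilates the transport and pressure terms and gives $\frac{1}{2}\frac{d}{dt}E = -\int_{\mathbb{R}^2}(v\otimes v):\nabla u_2$, whence $\frac{d}{dt}E \le 2\|v\|_{L^{2p'}}^2\|\nabla u_2\|_{L^p}$ for every $p \in (1,\infty)$ (the identity being justified by mollification for these Lagrangian solutions). Interpolating $\|v\|_{L^{2p'}}^2 \le \|v\|_{L^2}^{2(1-1/p)}\|v\|_{L^\infty}^{2/p} = E^{1-1/p}M_*^{2/p}$ and inserting the two previous bounds gives $\frac{d}{dt}E \le Cp\,M_*^{1+2/p}\,\Theta_n(p)\,E^{1-1/p}$; optimizing the choice of $p$ (roughly $p \simeq \log(e/E)$, mildly enlarged by $\log M_*$ so that $M_*^{2/p}$ stays bounded) in the range where $\varphi_{\Theta_n}$ is given by its first branch, and handling the complementary bounded range trivially, I would obtain after absorbing constants
\begin{equation}
\frac{d}{dt}E \le C M_*\,\varphi_{\Theta_n}(E) = C M_*\, E\,\log(e/E)\,\Theta_n(\log(e/E)) .
\end{equation}
Finally, Osgood's lemma for this inequality with $E(0) = \|u_{0,1}-u_{0,2}\|_{L^2}^2$ gives, with $\mathcal{M}(z) = \int_z^m \frac{dr}{\varphi_{\Theta_n}(r)}$ and $R = e^{-\mathcal{M}}$ as in \eqref{mumod}, that $\mathcal{M}(E(t)) \ge \mathcal{M}(E(0)) - CM_*t$, i.e. $E(t) \le R^{-1}\!\big(e^{CM_*t}R(E(0))\big)$ (the right side being finite exactly in the small-data regime). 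Since $[u_i]_{\varphi_{\Theta_n}} \le M\|\omega_{0,i}\|_{Y_{\Theta_n}} \le MM_*$, taking the constant $M$ in \eqref{modcontivelvor} at least $C$ yields $R^{-1}(e^{CM_*t}R(r)) \le R^{-1}(e^{MM_*t}R(r)) \le \mu_{\omega,n,t}(r)$ for suitable $C_1, C_2$ exactly as in \eqref{mu1}, whence $\|u_1(t)-u_2(t)\|_{L^2}^2 = E(t) \le \mu_{\omega,n,t}(\|u_{0,1}-u_{0,2}\|_{L^2}^2)$.

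The step I expect to be the main obstacle is the rigorous justification of the energy identity for solutions with unbounded vorticity, i.e. with $\|\nabla u_i\|_{L^\infty} = \infty$ (which is why the clean flow-map/Wasserstein route in the spirit of Loeper, though tempting, is itself delicate here and requires a different bookkeeping); the other delicate points are the uniform $L^\infty$ bound on $v$ and the exact choice of $p$ needed to reproduce $\varphi_{\Theta_n}$ with the sharp linear dependence on $\max_i\|\omega_{0,i}\|_{Y_{\Theta_n}}$, both of which rest on the $Y_{\Theta_n}$ control of the transported vorticities.
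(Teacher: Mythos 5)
The paper does not prove this proposition at all: it is imported verbatim from \cite{KL2022} (it is the velocity stability estimate of Kim--La, cf.\ their Proposition 18), and the present paper only uses it as a black box together with \eqref{mu1} in the proof of Theorem \ref{Euler}. Your proposal supplies the standard Yudovich-type energy argument for the generalized Yudovich class, which is essentially the method behind the cited result: the energy identity $\frac{1}{2}\frac{d}{dt}\|v\|_{L^2}^2=-\int (v\otimes v):\nabla u_2$, the Calder\'on--Zygmund bound $\|\nabla u_2\|_{L^p}\le Cp\,\Theta_n(p)\max_i\|\omega_{0,i}\|_{Y_{\Theta_n}}$ from conservation of the vorticity $L^p$ norms, the uniform $L^\infty$ bound on $v$, interpolation, optimization in $p\simeq\log(e/E)$ to produce $\frac{d}{dt}E\le CM_*\varphi_{\Theta_n}(E)$, and Osgood's lemma; the identification of $R^{-1}\bigl(e^{CM_*t}R(r)\bigr)$ with $\mu_{\omega,n,t}(r)$ is exactly the explicit computation the paper records in Remark \ref{explicit} and uses for \eqref{mu1}. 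I see no gap in the outline; the points you flag are the genuine technical ones, namely the rigorous justification of the energy balance for solutions with merely Osgood (non-Lipschitz) velocity --- standard in Yudovich theory since $u_i\in L^\infty_t(L^2+L^\infty)$ and the solutions are Lagrangian/renormalized, but it does need to be said --- and the bookkeeping in the large-$E$ branch and in absorbing the $M_*^{2/p}$ and shifted-argument factors via the admissibility (quasi-subadditivity) of $\Theta_n$, all of which only affect the constants $C_1,C_2,M$ that the statement leaves free. Two cosmetic slips: the a priori bound $E(t)\le 4\max_i\|u_{0,i}\|_{L^2}^2$ follows from conservation of the kinetic energy $\|u_i(t)\|_{L^2}$, not from conservation of the vorticity $L^p$ norms; and note that the smallness of $\|u_{0,1}-u_{0,2}\|_{L^2}$ is genuinely needed (as in the theorem's hypothesis) so that $e^{CM_*t}R(E(0))$ stays in the range of $R$, which your parenthetical remark correctly anticipates.
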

One can adjust $C_1, C_2>0$ appropriately (if necessary) so that \eqref{mu1} and \eqref{mu2} are simultaneously satisfied. Next, by the standard Lusin estimate, we have
\begin{equation}
\| \omega_{0,i} (x) - \omega_{0,i} (y) | \le C |x-y|^s (D_s \omega_{0,i}(x) + D_s \omega_{0,i} (y) ), i=1,2,
\end{equation}
where
\begin{equation}
D_s f(x) := \left (\int_{\mathbb{R}^d}\frac{|f(x+h) - f(x) |^2}{|h|^{d+2s} } dh \right)^{\frac{1}{2} } \in L^2.
\end{equation}
Therefore, by the same argument as in Theorem \ref{main2}, we have
\begin{equation}
\int_{B_1 (0) } \int_{\mathbb{R}^d} \frac{|\omega_i (x,t) - \omega_i (x+h, t) |^2}{|h|^d} (\mu_{\omega, n, t} (|h|))^{-s} dxdh \le C \| D_s \omega_i \|_{L^2} ^2 \int_0 ^1 \frac{(\mu_{\omega, n, t} (r))^{s}}{r} dr <\infty,
\end{equation}
since $\mu_{\omega, n, t}$ vanishes faster than any logarithm by Lemma \ref{lognlipschitzreg}. Therefore, applying  \eqref{interpolation} with $\mu(r)  = (\mu_{\omega, n, t} (r))^s$ and $f = \omega_1 - \omega_2$ gives that for sufficiently small $\varepsilon>0$, we have
\begin{equation}
\begin{split}
C^{-1} \| \omega_1 (t) - \omega_2 (t) \|_{L^2}^2 &\le  (\mu_{\omega, n, t} (\varepsilon))^s + |\log \varepsilon | \frac{\| \omega_1 (t) - \omega_2 (t) \|_{L^2} ^2 }{\log \left ( 2 + \| \omega_1 (t) - \omega_2 (t) \|_{L^2} ^2 / \| u_1 (t) - u_2 (t) \|_{L^2} ^2 \right ) } \\
&\le (\mu_{\omega, n, t} (\varepsilon))^s + |\log \varepsilon| \frac{\| \omega_1 (t) - \omega_2 (t) \|_{L^2} ^2 }{\log \left ( 2 +\frac{ \| \omega_1 (t) - \omega_2 (t) \|_{L^2} ^2}{\mu_{\omega, n, t} (\| u_{0,1} - u_{0,2} \|_{L^2} ^2 ) } \right ) } .
\end{split}
\end{equation}
Now take 
\begin{equation}
\varepsilon = \left ( 2 +\frac{ \| \omega_1 (t) - \omega_2 (t) \|_{L^2} ^2}{\mu_{\omega, n, t} (\| u_{0,1} - u_{0,2} \|_{L^2} ^2 ) } \right )^\gamma
\end{equation}
for some $\gamma<0$ so that $$ \frac{|\log \varepsilon | }{\log \left ( 2 +\frac{ \| \omega_1 (t) - \omega_2 (t) \|_{L^2} ^2}{\mu_{\omega, n, t} (\| u_{0,1} - u_{0,2} \|_{L^2} ^2 ) } \right ) }   < C^{-1}/2,$$
and then we have
\begin{equation}
\| \omega_1 (t) - \omega_2 (t) \|_{L^2}^2 \le C \mu_{\omega, n, t}  \left ( \left ( 2 +\frac{ \| \omega_1 (t) - \omega_2 (t) \|_{L^2} ^2}{\mu_{\omega, n, t} (\| u_{0,1} - u_{0,2} \|_{L^2} ^2 ) } \right )^\gamma \right ).
\end{equation}
Replacing $ \| \omega_1 (t) - \omega_2 (t) \|_{L^2} ^2$ on the right-hand side by $2 \max_i \| \omega_{0,i} \|_{L^2}$ gives \eqref{stabilityestimate}.

\bibliographystyle{abbrv}
\bibliography{Bibliography} 
\end{document}